\documentclass[12pt,  leqno]{article}
\usepackage[english]{babel}
\usepackage[T1]{fontenc}
\usepackage{amsopn,  amsthm,  amssymb,  amsmath,  epsfig,  graphics,  enumerate,  array,  calc}
\usepackage{txfonts}

\theoremstyle{plain}
 \newtheorem{thm}{Theorem}[section]
 
 \newtheorem{lem}[thm]{Lemma}
 \newtheorem{prop}[thm]{Proposition}
 \newtheorem{eg}[thm]{Example}
 \theoremstyle{definition}
 \newtheorem{defn}[thm]{Definition}
 \theoremstyle{remark}
 \newtheorem{rem}[thm]{Remark}

\textwidth=15cm \textheight=21cm \oddsidemargin=1cm
\evensidemargin=1cm \topmargin=-.4cm

\begin{document}

\title{On a Conjecture about the Number of Solutions to Linear Diophantine
Equations with a Positive Integer Parameter }

\author{\large   Sheng CHEN$^{*}$ and Nan LI}

\maketitle

\begin{center}
{Department of Mathematics,    Harbin Institute of Technology,    \\
Harbin 150001,    P.R.China}\\

\end{center}

\footnotetext{\hspace{-0.5cm} $^*$\hspace*{1mm} Corresponding author.    
\hspace*{3mm}
 {\it E-mail address:} {schen@hit.edu.cn\\ Project 10526016  Supported by National Natural Science Foundation
of China and Project HITC200701 Supported by Science Research
Foundation in Harbin Institute of Technology.}}

\begin{center}
{\bf Abstract\\[1ex]}
\end{center}
Let $A(n)$ be a $k\times s$ matrix and $m(n)$ be a $k$ dimensional
vector,   where all entries of $A(n)$ and $m(n)$ are integer-valued
polynomials in $n$. Suppose that $$t(m(n)|A(n)
)=\#\{x\in\mathbb{Z}_{+}^{s}\mid A(n)x=m(n)\}$$ is finite for each
$n\in \mathbb{N}$,  where $Z_+$ is the set of nonnegative integers.
This paper conjectures that $t(m(n)|A(n))$ is an integer-valued
quasi-polynomial in $n$ for $n$ sufficiently large and verifies the
conjecture
in several cases.\\

\noindent 2000 AMS Classification:  Primary 05A15,   Secondary 11D45,  11P99\\
 Key words: \hspace{1mm}
integer-valued quasi-polynomial; generalized Euclidean division;

$\quad\quad\quad\, \, \, \, $ vector partition function

\section{Introduction}
Let $A(n)$ be a $k\times s$ matrix and $m(n)$ be a $k$ dimensional
vector,   where all entries of $A(n)$ and $m(n)$ are integer-valued
polynomials in $n$. Suppose that
$$t(m(n)|A(n))=\#\{x\in\mathbb{Z}_{+}^{s}\mid
A(n)x=m(n)\}$$  is   finite  for each $n\in \mathbb{N}$,  where
$Z_+$ denotes the set of nonnegative integers. In other words,
$t(m(n)|A(n))$ is the number of nonnegative integer solutions to
linear Diophantine equations with a positive integer parameter $n$.

We  conjecture that
\begin{center}
{\emph{ $t(m(n)|A(n))$ is an integer-valued quasi-polynomial in $n$
for $n$ sufficiently large. }}
\end{center}

 This conjecture is motivated by Ehrhart's conjecture,  for which the readers are referred to
\cite{E},  Exercise 12 in Chapter 4  of \cite{stanley} and its
errata and addenda.

 When $k=1$ and $s=3$,  we proved that the conjecture is true in
 $\cite{ours2}$. In this paper,  we prove the following theorems,
 which verify the conjecture in some cases.

\begin{thm} [$k=1$]\label{1.1} Let $m(n)$,  $a_i(n)$ ($1\leqslant i\leqslant s$) be integer-valued
polynomials in $n$ with positive leading coefficients. Suppose that
$A(n)=(a_1(n), \cdots, a_s(n))$  is strongly coprime (see Definition
$\ref{strong}$). Then $t(m(n)|A(n))$ (denoted by $p_{A(n)}(m(n))$ in
this case) is an integer-valued quasi-polynomial in $n$ for $n$
sufficiently large.
\end{thm}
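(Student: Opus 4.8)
The plan is to argue by induction on the number of variables $s$, using the single-variable \emph{generalized Euclidean division} as the engine that turns integer parts of polynomial ratios into quasi-polynomials. For the base case $s=1$ the equation $a_1(n)x_1=m(n)$ has a nonnegative solution exactly when $a_1(n)\mid m(n)$, in which case it is unique, so $p_{A(n)}(m(n))\in\{0,1\}$ and one only needs that the predicate $a_1(n)\mid m(n)$ is eventually periodic in $n$. For two integer-valued polynomials with positive leading coefficient this follows by writing $m(n)=q(n)a_1(n)+r(n)$ with $0\le r(n)<a_1(n)$ and checking that $q(n)$ and $r(n)$ are quasi-polynomials for $n$ large (a small example such as $n^2=(2n+1)\lfloor(n-1)/2\rfloor+r(n)$ already exhibits the period-$2$ behaviour); the divisibility holds precisely when $r(n)=0$.

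For the inductive step I would slice on the last coordinate. Writing $A'(n)=(a_1(n),\dots,a_{s-1}(n))$ and using $m(n)=q(n)a_s(n)+r(n)$ from the generalized Euclidean division, the substitution $x_s=q(n)-j$ gives
$$ p_{A(n)}(m(n))=\sum_{j=0}^{q(n)} p_{A'(n)}\bigl(r(n)+j\,a_s(n)\bigr). $$
Thus it suffices to prove that the right-hand sum is an integer-valued quasi-polynomial in $n$, where the summand is the denumerant of the $(s-1)$-variable problem evaluated along the arithmetic progression $r(n)+j\,a_s(n)$ in the target. For this to close up one needs strong coprimality to pass to the sub-tuple $A'(n)$, so that the inductive content for $s-1$ applies.

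Two facts then have to be combined. First, for each fixed $n$ the classical one-parameter theory (partial fractions of $\prod_{i<s}(1-z^{a_i(n)})^{-1}$) makes $p_{A'(n)}(r(n)+j\,a_s(n))$ a quasi-polynomial in $j$ for $j$ large, and summing a quasi-polynomial in $j$ from $0$ to a quasi-polynomial bound $q(n)$ again yields a quasi-polynomial, by Bernoulli--Faulhaber summation followed by composition with $q(n)$. Second, and this is the delicate point, the coefficients and the period of that quasi-polynomial in $j$ depend on $n$ through the growing quantities $a_i(n)$, so one must show they are themselves eventually quasi-polynomial, respectively eventually periodic, in $n$. This is exactly where the hypothesis that $A(n)$ is strongly coprime (Definition \ref{strong}) enters: it controls the relevant gcd's and the orders of the roots of unity that appear as poles, keeping each residue contribution organized as $n$ grows and reducing the remaining integer-part operations to further generalized Euclidean divisions.

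The main obstacle I expect is precisely this synchronization: the period $\operatorname{lcm}(a_1(n),\dots,a_{s-1}(n))$ of the inner denumerant is unbounded in $n$, so one cannot simply quote a fixed-coefficient quasi-polynomial and must instead track how each partial-fraction term varies with $n$ and verify that, under strong coprimality, the $n$-dependence of both the polynomial part and the periodic part collapses to a finite quasi-polynomial description. Assembling the finitely many residue classes of $n$ into a single quasi-polynomial valid for all sufficiently large $n$ would then complete the induction.
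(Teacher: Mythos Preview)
Your slicing formula
\[
p_{A(n)}(m(n))=\sum_{j=0}^{q(n)}p_{A'(n)}\bigl(r(n)+j\,a_s(n)\bigr)
\]
is precisely the identity~(\ref{RL}) that the paper also starts from, and you correctly isolate the real difficulty: the inner denumerant has period $\operatorname{lcm}(a_1(n),\dots,a_{s-1}(n))$, which grows with $n$, so Faulhaber summation in $j$ does not directly give a quasi-polynomial in $n$. The gap is that your induction on $s$ never closes. The hypothesis for $s-1$ speaks only of $p_{A'(n)}(m'(n))$ with a \emph{fixed} integer-valued polynomial $m'(n)$, whereas the summand carries the running index $j$; you do not actually invoke the $(s-1)$-case, and instead fall back on fixing $n$ and quoting the classical one-parameter structure, which returns you to the unbounded-period obstacle you yourself flag. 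The sentence ``strong coprimality \dots\ keeps each residue contribution organized'' is exactly the step that needs proof.

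The paper supplies that step with an explicit Fourier--Dedekind identity (Lemma~\ref{4.1.0}/\ref{4.1.01}): carrying out the geometric sum in $j$ on each pole contribution shows that $s_{-m(n)}(a_1(n),\dots,a_{s-1}(n);a_s(n))$ equals a quasi-polynomial plus Fourier--Dedekind sums with moduli $a_1(n),\dots,a_{s-1}(n)$, \emph{still with $s-1$ front arguments each}. So $s$ never drops. What does drop is degree: inside a sum with modulus $a_i(n)$ one may replace every front argument by its remainder mod $a_i(n)$, and this is exactly the passage $A^{(k)}(n)\to A^{(k+1)}(n)$. Strong coprimality (Definition~\ref{strong}) is the hypothesis that all vectors in this tree stay pairwise coprime, so the identity can be reapplied; Lemma~\ref{4.4} guarantees the moduli become constants after finitely many steps, at which point each remaining Fourier--Dedekind sum is visibly quasi-rational in $n$ and Proposition~\ref{rational} finishes. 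In short, the correct descent parameter is the height $h$ of the tree $A^{(k)}(n)$ (equivalently, the degrees of the $a_i(n)$), not the number of variables $s$.
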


\begin{thm}[$k>1$, Unimodular]\label{1.2}Suppose that $A(n)=A\in M_{k\times s}(\mathbb{Z})$  satisfies
the following  conditions: (1) $A$ is unimodular,  i.e.,  the
polyhedron $\{x:Ax=b, x\geqslant 0\}$ has only integral vertices
whenever $b$ is in the lattice spanned by the columns of $A$;
 (2) $Ker(A)\cap \mathbb{R}_{\geqslant 0}^{s}=0$. Then $t(m(n)|A(n))$ is a polynomial in $n$ for $n$ sufficiently
large.
\end{thm}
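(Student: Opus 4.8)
\medskip
\noindent\textbf{Proof proposal.}
The plan is to recognize $t(m(n)\mid A)$ as the value of a vector partition function along the polynomial curve $n\mapsto m(n)$, and then to use unimodularity to replace the generically quasi-polynomial behaviour by genuine polynomial behaviour. Write $a_1,\dots,a_s$ for the columns of $A$ and set $\phi_A(b)=\#\{x\in\zz_{\geqslant 0}^{s}:Ax=b\}$, so that $t(m(n)\mid A)=\phi_A(m(n))$. Condition (2) says that the only $y\geqslant 0$ with $Ay=0$ is $y=0$, so the polyhedron $P_b=\{x\geqslant 0:Ax=b\}$ has no nonzero recession direction and is a bounded polytope; this both matches the finiteness hypothesis and lets me count lattice points in a compact region. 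If $b$ lies outside the cone $\operatorname{pos}(a_1,\dots,a_s)$ spanned by the columns, then $\phi_A(b)=0$.

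I would then invoke the structure theorem for vector partition functions (Blakley; Sturmfels; Dahmen--Micchelli): the cone $\operatorname{pos}(a_1,\dots,a_s)$ carries a finite polyhedral fan, the \emph{chamber complex}, obtained as the common refinement of all simplicial cones spanned by linearly independent $k$-subsets of $a_1,\dots,a_s$, such that on the closure of each maximal chamber $C$ the function $\phi_A$ agrees with a single quasi-polynomial $q_C(b)$ of degree $s-k$. This is the step where the hypotheses enter. For $b$ in the lattice spanned by the columns, condition (1) makes every vertex $A_B^{-1}b$ of $P_b$ a lattice point depending linearly on $b$, so that $P_b$ is a lattice polytope; Brion's decomposition of $\phi_A(b)$ over the tangent cones at these vertices then involves only unimodular cones and contributes no nontrivial roots-of-unity terms, i.e.\ the coefficients of $q_C$ have period $1$. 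Hence each $q_C$ is an honest polynomial in $b$.

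It remains to show that $b=m(n)$ eventually lands in one fixed closed chamber. For a fixed cone $\tau$ of the fan, the membership $m(n)\in\tau$ is a finite conjunction of conditions $\langle u,m(n)\rangle\geqslant 0$ and $\langle u,m(n)\rangle=0$; each left-hand side is a polynomial in $n$, and a polynomial has eventually constant sign, so the truth value of $m(n)\in\tau$ is eventually constant. As the fan has finitely many cones, there is an $N$ and a cone $\tau_0$ with $m(n)$ in the relative interior of $\tau_0$ for all $n\geqslant N$; picking any maximal chamber $C$ with $\tau_0\subseteq\overline{C}$ gives $m(n)\in\overline{C}$ for all $n\geqslant N$. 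Since the description by $q_C$ is valid on the \emph{closed} chamber, $t(m(n)\mid A)=q_C(m(n))$ for all large $n$, and as $q_C$ is a polynomial in $b$ while each coordinate of $m(n)$ is a polynomial in $n$, the composite $q_C(m(n))$ is a polynomial in $n$, which is the assertion of Theorem \ref{1.2}. (If $m(n)\notin\operatorname{pos}(a_1,\dots,a_s)$ for large $n$, the relevant polynomial is $0$.)

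The main obstacle is the second paragraph: showing that unimodularity upgrades the generically quasi-polynomial $\phi_A$ to a piecewise \emph{polynomial} function, i.e.\ that every chamber quasi-polynomial has period $1$. This is exactly where the integral-vertex hypothesis does its work, and it is what separates Theorem \ref{1.2} from the general conjecture; dropping it, one recovers only a quasi-polynomial in $n$. Two supporting points deserve care: that the quasi-polynomial $q_C$ is valid on the closed chamber $\overline{C}$ (equivalently, that the quasi-polynomials of adjacent chambers agree along their common wall, forced by $\phi_A$ being single-valued), which makes $m(n)$ meeting a wall harmless; and that $m(n)$ stays in the column lattice for large $n$ (automatic when unimodularity is taken in the strong sense that the columns span $\zz^{k}$), so that no residual periodicity in $n$ survives. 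The eventual-single-chamber argument is then routine given the eventual sign-constancy of polynomials.
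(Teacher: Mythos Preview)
Your proposal is correct and follows essentially the same route as the paper: invoke the chamber decomposition for the vector partition function $\phi_A$, use unimodularity to conclude that $\phi_A$ is a genuine polynomial (not merely a quasi-polynomial) on each chamber, and then observe that the polynomial curve $n\mapsto m(n)$ eventually settles into a single closed chamber because membership is governed by finitely many polynomial inequalities with eventually constant sign. The paper simply cites the piecewise-polynomiality of $\phi_A$ in the unimodular case as a black box (its Lemma~\ref{uni}, attributed to De~Loera--Sturmfels and Dahmen--Micchelli), whereas you sketch a justification via Brion's formula, but the overall architecture is identical.
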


\begin{thm}[1-prime]\label{1.3} Suppose that $A(n)$ is a $2\times 3$
matrix and $A(n)$ is 1-prime (see Definition $\ref{1prime}$). Then
$t(m(n)|A(n))$ is a quasi-polynomial in $n$ for $n$ sufficiently
large.
\end{thm}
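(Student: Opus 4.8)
The plan is to prove Theorem~\ref{1.3} by reducing the $2\times 3$ system $A(n)x=m(n)$ to a situation governed by the already-established $k=1$ case, Theorem~\ref{1.1}. The key structural feature is that the kernel of a (generic) $2\times 3$ integer matrix is one-dimensional, so the fiber $\{x\in\zz_+^3 : A(n)x=m(n)\}$ lives along a single lattice direction. Concretely, I would first use the 1-prime hypothesis (Definition~\ref{1prime}) and integer row operations over $\zz[n]$ — a generalized Euclidean division in the polynomial entries, which is exactly the tool flagged in the keywords — to bring $A(n)$ into a reduced, near-triangular form. One row should, after the 1-prime reduction, express one of the three variables as a fixed affine-in-$n$ combination of the others, collapsing the two equations into essentially one effective equation in the remaining degrees of freedom.

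With that reduction in hand, the second step is to parametrize the solution set. Writing $\ker A(n)\cap\qq^3 = \qq\cdot v(n)$ for a primitive integer vector $v(n)$ whose entries are integer-valued polynomials in $n$, every solution has the form $x = x_0(n) + \ell\, v(n)$ for a particular solution $x_0(n)$ and an integer $\ell$. The constraint $x\in\zz_+^3$ then becomes a system of linear inequalities in $\ell$ whose coefficients are integer-valued polynomials in $n$; counting the integer points in the resulting interval for $\ell$ is precisely the one-parameter counting problem. At this point I would invoke Theorem~\ref{1.1}: the number of admissible $\ell$ is controlled by a floor/ceiling of ratios of integer-valued polynomials, and the strong-coprimality machinery behind Theorem~\ref{1.1} shows such a count is an integer-valued quasi-polynomial in $n$ for large $n$.

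The main obstacle, and where the 1-prime hypothesis must do real work, is controlling the \emph{endpoints} of the interval of valid $\ell$ uniformly in $n$. The upper and lower bounds for $\ell$ come from the positivity constraints on each of the three coordinates, and which coordinate supplies the binding constraint can in principle switch as $n$ varies; moreover the leading coefficients of the entries of $v(n)$ and of $x_0(n)$ govern the asymptotic length of the interval. I expect that for $n$ sufficiently large the sign pattern of the entries of $v(n)$ stabilizes (each entry being an integer-valued polynomial, hence eventually of constant sign), so that a fixed pair of coordinates determines the floor and ceiling bounding $\ell$; the 1-prime condition is what guarantees the relevant $\gcd$-type normalizations needed to write these bounds as genuine integer-valued-polynomial ratios rather than merely rational ones. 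Handling the finitely many residue classes of $n$ that arise from these floor and ceiling operations, and checking that on each class the count agrees with a polynomial, yields the quasi-polynomial conclusion.

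Where this argument could require additional care is the possibility that $\ker A(n)$ degenerates for special values of $n$ (e.g., the rank of $A(n)$ dropping), but such values form a set cut out by polynomial conditions and hence are either finite or periodic, and in either case they are absorbed into finitely many residue classes, consistent with the ``$n$ sufficiently large'' and quasi-polynomial framework.
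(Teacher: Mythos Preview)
Your kernel-parametrization approach is viable and genuinely different from the paper's proof, but two points need correction.

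First, the paper does \emph{not} reduce to Theorem~\ref{1.1}. Instead it quotes an explicit two-dimensional Popoviciu-type formula (Lemma~\ref{5.4}, due to Xu) for $t(m\mid A)$ when $A$ is a $2\times 3$ integer 1-prime matrix: on each chamber $\overline{\Omega}_i$ the count equals a rational-linear term in $m$ plus two fractional-part terms of the form $\{\,p/Y_{ij}\,\}$ built from the $2\times 2$ minors $Y_{ij}$ and certain modular inverses. The proof then observes that $m(n)$ eventually lies in a fixed $\overline{\Omega}_i$, that all the ingredients (the $Y_{ij}$, the inverses in Definition~\ref{3.0.8}, the linear forms) are integer-valued quasi-polynomials by the generalized GCD theory of Section~2, and that each fractional-part term is therefore quasi-polynomial by Theorem~\ref{2. 0. 3} and Proposition~\ref{rational}. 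No appeal to Theorem~\ref{1.1} is made.

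Second, your own argument should not invoke Theorem~\ref{1.1} either: once you write the solution set as $\{x_0(n)+\ell\,v(n):\ell\in\zz\}\cap\zz_+^3$, the count is simply $\lfloor U(n)\rfloor-\lceil L(n)\rceil+1$ for certain ratios $L(n),U(n)$ of integer-valued quasi-polynomials, and this is handled directly by Theorem~\ref{2. 0. 3} (and Remark~\ref{2. 0. 01}); the ``strongly coprime'' hypothesis of Theorem~\ref{1.1} is both unavailable and unnecessary here. What the 1-prime hypothesis actually buys you is (i) that $v(n)=(Y_{23},-Y_{13},Y_{12})$ is a \emph{primitive} kernel vector for every $n$, so $x_0(n)+\zz\,v(n)$ really exhausts all integer solutions, and (ii) via Theorem~\ref{2. 0. 0099}, the existence of a particular solution $x_0(n)$ whose entries are integer-valued quasi-polynomials --- a step you assert but do not carry out, and which is the one place your sketch genuinely needs more detail. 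Your observation that the binding constraints among the three positivity inequalities stabilize for large $n$ (on each residue class) is correct and completes the argument. In sum, your route is more elementary and self-contained than the paper's (it avoids the external formula of Lemma~\ref{5.4}), while the paper's route is shorter because that formula packages the entire computation.
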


\section{The Theory of Generalized Euclidean Division and GCD}
As a preparation,  in this section,  we introduce the theory of
generalized Euclidean division and GCD for the ring $R$ of
integer-valued quasi-polynomials (see Definition $\ref{2.0.1}$ and
Proposition $\ref{R}$),  which was discussed in $\cite{mine}$. Here
we list related definitions and results without proofs.

\begin{defn}[Integer-valued quasi-polynomial,  see Definition 3 in $\cite{mine}$]\label{2.0.1}
We call a function $f:\mathbb{N}\to\mathbb{Z}$ an integer-valued
quasi-polynomial,  if there exists a positive integer $T$ and
polynomials $f_i(x)\in \mathbb{Z}[x]$ ($i=0, 1, \cdots, T-1$),  such
that when $n=Tm+i$ ($m\in \mathbb{N}$),  we have $f(n)=f_i(m)$. We
call $(T,  \{f_i(x)\}_{i=0}^{T-1})$ a \emph{representation} of
$f(x)$ and write
$$f(x)=(T,  \{f_i(x)\}_{i=0}^{T-1})$$ Then $\max\{degree(f_i(x))|
i=0,   1,   \cdots,   T-1\}$ and $T$  are called the  degree and
period
 of this representation respectively.
\end{defn}

\begin{prop}[Definition of the ring $R$,  see Proposition 6 in $\cite{mine}$]\label{R}The set of all integer-valued quasi-polynomials,    denoted
by $R$,    with pointwisely defined addition and multiplication,  is
a commutative ring with identity.
\end{prop}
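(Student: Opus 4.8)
The plan is to realize $R$ as a subring-with-identity of the ambient commutative ring $F=\{f:\nn\to\zz\}$ under pointwise addition and multiplication. Since $F$ is itself a commutative ring with identity (associativity, commutativity, and distributivity hold pointwise because they hold in $\zz$), it suffices to verify three closure properties: that $R$ contains the multiplicative identity, that $R$ is closed under negation and addition, and that $R$ is closed under multiplication. Every remaining ring axiom then transfers to $R$ automatically from $F$, so no separate check of associativity or distributivity is required.

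The identity and additive structure are immediate. The constant function $1$ lies in $R$ via the representation $(1,\{f_0(x)\})$ with $f_0(x)=1$, and the zero function lies in $R$ in the same way; these serve as the multiplicative and additive identities. If $f=(T,\{f_i(x)\}_{i=0}^{T-1})\in R$, then $-f$ is represented by $(T,\{-f_i(x)\}_{i=0}^{T-1})$, and each $-f_i\in\zz[x]$, so $-f\in R$.

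The crux is closure under addition and multiplication, where the only genuine obstacle is that two quasi-polynomials may be presented with different periods. I would dispose of this with a change-of-period step: if $f$ has representation $(T,\{f_i\}_{i=0}^{T-1})$ and $c$ is a positive integer, then $f$ also admits a representation of period $cT$. Indeed, write any residue $r$ modulo $cT$ as $r=Tq+i$ with $0\leqslant i<T$ and $0\leqslant q<c$; then for $n=(cT)m+r=T(cm+q)+i$ one gets $f(n)=f_i(cm+q)$, so the polynomial attached to residue $r$ is $\tilde f_r(x)=f_i(cx+q)$. Substituting $cx+q$ with $c,q\in\zz$ into $f_i\in\zz[x]$ again yields an element of $\zz[x]$, so this is a legitimate representation of period $cT$.

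Given $f=(T_1,\{f_i\})$ and $g=(T_2,\{g_j\})$ in $R$, I would set $T=\mathrm{lcm}(T_1,T_2)$, so that $T/T_1$ and $T/T_2$ are positive integers, and apply the change-of-period step to rewrite both functions with the common period $T$, obtaining polynomials $\{F_r\}_{r=0}^{T-1}$ and $\{G_r\}_{r=0}^{T-1}$ in $\zz[x]$. Evaluating at $n=Tm+r$ gives $(f+g)(n)=F_r(m)+G_r(m)$ and $(fg)(n)=F_r(m)G_r(m)$. Since $\zz[x]$ is closed under addition and multiplication, $(T,\{F_r+G_r\})$ and $(T,\{F_rG_r\})$ are valid representations of $f+g$ and $fg$, so both lie in $R$. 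The sole technical point is precisely this reduction to a common period; once it is available, closure is nothing more than the ring structure of $\zz[x]$ applied residue by residue, and the proposition follows.
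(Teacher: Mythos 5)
Your proof is correct. Note that the paper itself gives no proof of this proposition --- Section 2 explicitly lists these results ``without proofs,'' deferring to Proposition 6 of the cited work on the ring of integer-valued quasi-polynomials --- so there is no argument in the paper to compare against; your argument supplies exactly the standard one. The two ingredients you isolate are the right ones: the change-of-period step (a representation of period $T$ yields one of period $cT$ via $\tilde f_r(x)=f_i(cx+q)$ for $r=Tq+i$, which stays in $\mathbb{Z}[x]$), and the reduction of all remaining ring axioms to the ambient ring of functions $\mathbb{N}\to\mathbb{Z}$, so that closure under pointwise sum and product, checked residue by residue at the common period $\mathrm{lcm}(T_1,T_2)$, is all that needs verifying.
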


\begin{defn}[See Definition 9 in $\cite{mine}$]\label{2. 0. 2}
               Let $r(x)\in R$ and $r(x)=(T,
 \{r_i(x)\}_{i=0}^{T-1})$. We shall say $r(x)$ is nonnegative and write $r(x)\succcurlyeq  0$,    if it
satisfies the following equivalent conditions:
\begin{enumerate}
\item[(a)]
for every $i=0,   1,   \cdots,   T-1$,  $r_i(x)=0$ or its leading
coefficient is positive;
          \item[(b)] there exists $C\in \mathbb{Z}$,    such that for every integer $n>C$,   we have $r(n) \geqslant 0$.
          \end{enumerate}
We shall say $r(x)$ is strictly positive and  write $r(x)\succ 0$,
if $r(x)=(T,
 \{r_i(x)\}_{i=0}^{T-1})$ satisfies the following  condition:

 $(a^{'})$ for every $i=0,   1,   \cdots,  T-1$,
the leading coefficient of $r_i(x)$ is positive.

 We write $
f(x)\preccurlyeq g(x)$  if $ g(x)-  f(x)\succcurlyeq 0$.
\end{defn}
\begin{defn}[See Definition 11 in $\cite{mine}$]\label{2. 0. 20}
Let $r(x) \in \mathbb{Z}[x] $,    define a function $| \cdot  |:$
$\mathbb{Z}[x]\rightarrow \mathbb{Z}[x]$ as follows:
$$|r(x)|=\left\{
          \begin{array}{ll}
            r(x),    & if\,   \,   \hbox{$r(x)\succ 0$} \\
            -r(x),    & if \,   \,    \hbox{$r(x)\prec 0$} \\
            0,    & if \,   \,   \hbox{$r(x)=0$}
          \end{array}
        \right. $$
\end{defn}

\begin{thm}[Generalized Euclidean division,  see Theorem 12 in $\cite{mine}$]\label{2. 0. 3}
Let $f(x),   g(x)\in \mathbb{Z}[x]$ and $g(x)\neq 0$.  Then there
exist unique $P(x),   r(x)\in R$ such that
$$f(x)=P(x)g(x)+r(x)\quad\,   where\quad
0\preccurlyeq r(x)\prec| g(x)|$$ In this situation,   we write
$P(x)=quo(f(x),  g(x))$ and $r(x)=rem(f(x), g(x))$. We call  the
inequality $0\leqslant r_1(x)<|g(x)|$ the remainder condition of
this division.\end{thm}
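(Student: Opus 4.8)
The plan is to establish existence by strong induction on $e=\deg f$ and then to read off uniqueness from a size argument. Throughout I may assume the leading coefficient of $g$ is positive: replacing $g$ by $-g$ merely replaces $P$ by $-P$ and leaves $r$ and $|g|$ untouched, so this costs nothing. With this normalization $g(n)>0$ and $g(n)\to+\infty$, and for every integer $n$ beyond some bound the ordinary integer division of $f(n)$ by $g(n)$ produces a unique remainder in $[0,g(n))$ together with a quotient. The entire content of the theorem is that this remainder and this quotient depend quasi-polynomially on $n$; the inequalities $0\preccurlyeq r\prec|g|$ being eventual conditions (part (b) of Definition~\ref{2. 0. 2}) is exactly what lets me ignore the finitely many small $n$.

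For the base case $\deg f<\deg g$ I observe that $f(n)/g(n)\to 0$, so eventually $-g(n)<f(n)<g(n)$ and the sign of $f(n)$ is constant, equal to that of the leading coefficient of $f$. Hence eventually either $r=f,\ P=0$ (when $f\succcurlyeq 0$) or $r=f+g,\ P=-1$ (when $f\prec 0$); in both subcases $r$ is a genuine integer polynomial with $0\preccurlyeq r\prec g$, so the claim holds.

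For the inductive step, with $e\geq d=\deg g$ and with $a,b$ the leading coefficients of $f,g$ (so $b>0$), I introduce
$$\tilde f(x)=b\,f(x)-a\,x^{e-d}g(x)\in\mathbb{Z}[x],$$
whose top terms cancel, so $\deg\tilde f<e$. The induction hypothesis applied to $\tilde f$ gives $\tilde f=\tilde P g+\tilde r$ with $\tilde P,\tilde r\in R$ and $0\preccurlyeq\tilde r\prec g$, whence $b\,f=Q\,g+\tilde r$ where $Q=a\,x^{e-d}+\tilde P\in R$. The decisive manoeuvre is to divide by the constant $b$: writing $q$ for the reduction of $Q$ modulo $b$ (a purely periodic residue in $[0,b)$, hence a quasi-polynomial) and $Q_1=(Q-q)/b\in R$, I obtain $f=Q_1 g+w$ with $w=(q\,g+\tilde r)/b$. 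Here $w$ takes integer values because $f$ does, it is a quasi-polynomial because $q\,g+\tilde r$ is one and is divisible by $b$ for large $n$, and it lies in $[0,g)$ because $q\,g+\tilde r\in[0,b\,g)$. Setting $P=Q_1$ and $r=w$ completes the inductive step.

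I expect the delicate point to be precisely this division by $b$: one must verify that an integer-valued function of the form (quasi-polynomial)$/b$ is again an integer-valued quasi-polynomial in the strict sense of Definition~\ref{2.0.1}, i.e. with integer-coefficient pieces. This holds after refining the period, since an integer-valued polynomial expands with integer coefficients in the binomial basis and so becomes integer-coefficient on each residue class of a suitable modulus. Finally, for uniqueness suppose $f=P_1g+r_1=P_2g+r_2$ with both remainders in $[0,|g|)$; then $(P_1-P_2)g=r_2-r_1$. For large $n$ the right-hand side has absolute value strictly less than $|g(n)|$, while the left-hand side is an integer multiple of $g(n)$, so both sides vanish. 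Thus $r_1$ and $r_2$, and likewise $P_1$ and $P_2$, agree for all large $n$; being quasi-polynomials, they are then identical, because on each residue class they are given by polynomials agreeing at infinitely many points.
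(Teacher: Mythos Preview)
The paper does not actually prove this theorem: Section~2 explicitly announces that it ``list[s] related definitions and results without proofs,'' and Theorem~\ref{2. 0. 3} is imported from \cite{mine}. So there is no in-paper argument to compare your attempt against.

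That said, your proof is correct. The existence argument is the standard pseudo-division manoeuvre over $\mathbb{Z}[x]$ (clear the leading coefficient of $g$, drop the degree of $f$, recurse), followed by the observation that dividing an element of $R$ by a nonzero integer $b$ lands back in $R$ after refining the period by a factor of $b$. Your justification of this last point is right; to make it fully explicit: if $Q_i(m)\in\mathbb{Z}[m]$ represents $Q$ on the class $n\equiv i\pmod T$ and one writes $m=bk+j$, then the expansion of $Q_i(bk+j)$ in powers of $k$ has every nonconstant coefficient divisible by $b$ (indeed by $b^{\ell}$ at degree~$\ell$) and constant term $Q_i(j)$, so $\big(Q_i(bk+j)-(Q_i(j)\bmod b)\big)/b\in\mathbb{Z}[k]$. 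This shows simultaneously that $q=Q\bmod b$ and $Q_1=(Q-q)/b$ lie in $R$ with period $Tb$, and then $w=f-Q_1g\in R$ automatically. The bound $0\preccurlyeq w\prec g$ follows exactly as you say from $0\le q(n)g(n)+\tilde r(n)<bg(n)$ for large $n$. The uniqueness argument is also sound: two quasi-polynomials agreeing for all large $n$ agree identically, since on each residue class of a common period they are integer polynomials coinciding at infinitely many points.
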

\begin{rem}[Similar to Remark 13 in $\cite{mine}$]\label{2. 0. 01}This division almost coincides with
the division in $\mathbb{Z}$ pointwisely in the following sense. Let
$f(x),   g(x)\in \mathbb{Z}[x]$ and
$$f(x)=P(x)g(x)+r(x), \quad\,   where\quad
0\preccurlyeq r(x)\prec| g(x)|$$By Definition $\ref{2. 0. 2}$,
the inequality $0\leqslant r(x)<|g(x)|$ will give an integer $C$
such that for all $n
>C$,
$$\big[\frac{f(n)}{g(n)}\big]=P(n)\,   ,   \,   \big\{\frac{f(n)}{g(n)}\big\}g(n)=r(n)$$
If $r(x)=0$,  we have
$$r(n)=\big\{\frac{f(n)}{g(n)}\big\}g(n)=0$$ for every $n\in \mathbb{N}$. The opposite also holds,
i.e.,  if  $r(n)=0$ for every $n\in \mathbb{N}$,  then $r(x)=0$ as
an integer-valued quasi-polynomial. So $ rem(f(x),   g(x))=0$ if and
only if for every $n\in \mathbb{N}$,   $ rem(f(n),   g(n))=0$.
\end{rem}

\begin{eg}[Similar to Example 14 in $\cite{mine}$]\label{eg1}The following is  an example to
illustrate the relation between Euclidean division in
$\mathbb{Z}[x]$ and in $\mathbb{Z}$.

 When $n>1$,
$$\bigg[\frac{n^{2}}{2n+1}\bigg]=\left\{
                             \begin{array}{ll}
                               m-1,    & \hbox{$n=2m$;} \\
                               m-1,    & \hbox{$n=2m-1$. }
                             \end{array}
                           \right. $$
$$\bigg\{\frac{n^{2}}{2n+1}\bigg\}(2n+1)=\left\{
                             \begin{array}{ll}
                               3m+1,    & \hbox{$n=2m$;} \\
                               m,    & \hbox{$n=2m-1$. }
                             \end{array}
                           \right.
$$
\end{eg}
 In order to use successive division in $R$,   we define generalized Euclidean
division for $f(x), g(x)\in R$ as in $\cite{mine}$. Suppose that
$T_0$ is the least common period of $f(x)$,  $g(x)$,  such that
$f(x)=(T_0,  \{f_i(x)\}_{i=0}^{T_0-1})$ and $g(x)=(T_0,
\{g_i(x)\}_{i=0}^{T_0-1})$.  Based on generalized division in
$\mathbb{Z}[x]$ (see Theorem $\ref{2. 0. 3}$),    we can define
$quo(f(x),   g(x))$ and $rem(f(x),  g(x))$ as follows (denoted by
$P(x)$ and $r(x)$ respectively): when $n=Tm+i$
$$P(n)=\left\{
         \begin{array}{ll}
           quo(f_i(m),   g_i(m)),    &  \hbox{ $g_i(m)\neq 0$} \\
           0,    &  \hbox{$g_i(m)=0$}
         \end{array}
       \right.,
r(n)=\left\{
         \begin{array}{ll}
           rem(f_i(m),   g_i(m)),    & \hbox{ $g_i(m)\neq 0$} \\
           f_i(m),    & \hbox{$g_i(m)=0$}
         \end{array}
       \right. $$
Then it is easy to check that $P(x),    r(x)\in R$ and
\begin{equation}\label{fg}
f(x)=R(x)g(x)+r(x)
\end{equation}
This will be called  the generalized Euclidean algorithm on the ring
of integer-valued quasi-polynomials.

By successive division in $R$,  we can develop generalized GCD
theory (see Definition $\ref{2. 0. 006}$),  similar to the case of
$\mathbb{Z}$ (see \cite{Bhu}).

\begin{defn}[Divisor,  similar to Definition 15 in $\cite{mine}$]\label{divisor}Suppose that $f(x),   g(x)\in R$ and for every $n\in \mathbb{N}$,
$g(n)\neq 0$.  Then by Remark $\ref{2. 0. 01}$,    the following two
conditions are equivalent:
\begin{enumerate}
  \item[(1)]rem($f(x),   g(x)$)=0;
  \item[(2)] for every $x\in \mathbb{N}$,    $g(x)$ is a divisor of $f(x)$.
\end{enumerate}
If  the two conditions are satisfied,    we shall call $g(x)$ a
divisor of $f(x)$ and write $g(x)\mid f(x)$.
\end{defn}
\begin{defn}[Quasi-rational function]\label{quasi-rational}
A  function $f:\mathbb{N}\to\mathbb{Q}$  is quasi-rational,  if
there exists a positive integer $T$ and rational functions
$\frac{f_i(x)}{g_i(x)}\in \mathbb{Q}(x)$,  where $f_i(x), g_i(x)\in
\mathbb{Z}[x]$ ($i=0, 1, \cdots, T-1$),  such that when $n=Tm+i$
($m\in \mathbb{N}$),  we have $f(n)=\frac{f_i(m)}{g_i(m)}$.
\end{defn}
By the equivalence of the  two conditions in Definition
$\ref{divisor}$,  we have the following property for quasi-rational
functions.

\begin{prop}\label{rational}
Let $f(x)$ be a  quasi-rational function. If for every $n\in
\mathbb{N}$,  $f(n)\in \mathbb{Z}$,  then $f(x)$ is an
integer-valued quasi-polynomial.
\end{prop}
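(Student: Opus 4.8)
The plan is to treat the statement one residue class at a time and to reduce it, via the divisor theory of Section 2, to the vanishing of a remainder. Fix a representation $f(x) = (T, \{f_i(x)/g_i(x)\}_{i=0}^{T-1})$ of the quasi-rational function, so that $f(Tm+i) = f_i(m)/g_i(m)$ for every $m \in \mathbb{N}$. For the quotient to be well defined we must have $g_i(m) \neq 0$ for all $m$, so Definition \ref{divisor} applies to the pair $f_i, g_i \in \mathbb{Z}[x]$. The hypothesis that $f$ is integer-valued says precisely that $f_i(m)/g_i(m) \in \mathbb{Z}$, i.e.\ $g_i(m) \mid f_i(m)$, for every $m$; this is condition (2) of that definition.

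By the equivalence of conditions (1) and (2) in Definition \ref{divisor}, it follows that $rem(f_i(x), g_i(x)) = 0$ for each $i$. I would then invoke the generalized Euclidean division of Theorem \ref{2. 0. 3}: writing $f_i(x) = P_i(x) g_i(x) + r_i(x)$ with $P_i, r_i \in R$ and $0 \preccurlyeq r_i(x) \prec |g_i(x)|$, the vanishing of the remainder gives $r_i = 0$ and hence the identity $f_i(x) = P_i(x) g_i(x)$ in $R$. Because this is a genuine pointwise identity of quasi-polynomials (cf.\ Remark \ref{2. 0. 01}), and because $g_i(m) \neq 0$, I can divide to obtain $f(Tm+i) = P_i(m)$ for every $m$, where $P_i = quo(f_i, g_i)$ is an honest element of $R$.

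It remains to glue the $P_i$ back together into a single integer-valued quasi-polynomial in the original variable $n$. Writing each $P_i$ with a common period $S$ (a common multiple of the individual periods of $P_0, \dots, P_{T-1}$), say $P_i = (S, \{q_{i,l}(x)\}_{l=0}^{S-1})$, I would use the substitution $n = Tm+i$, $m = Sm'' + l$, which gives $n = TS\,m'' + (Tl+i)$. As $i$ runs over $0, \dots, T-1$ and $l$ over $0, \dots, S-1$ the residues $Tl+i$ exhaust a complete system modulo $TS$; assigning to the residue $Tl+i$ the polynomial $q_{i,l}$ exhibits $f$ as an integer-valued quasi-polynomial of period $TS$, as required.

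The mathematical substance lives entirely in the middle step, where Definition \ref{divisor} converts ``integer-valued on all of $\mathbb{N}$'' into ``the remainder is the zero quasi-polynomial'' and the division then manufactures the quotient $P_i \in R$. The two surrounding steps are bookkeeping. The point I would be most careful about is that the division identity $f_i = P_i g_i$ must hold for every $m$, including small $m$, and not merely for $m$ large: this is what allows the conclusion $f(Tm+i) = P_i(m)$ on the full progression, and hence $f \in R$ rather than only eventual agreement. This is legitimate because on each residue class all the relevant functions are polynomials, so an identity valid for all large $m$ is valid for all $m$.
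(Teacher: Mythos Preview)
Your proposal is correct and follows exactly the route the paper intends: the paper's entire ``proof'' is the single sentence preceding the proposition, ``By the equivalence of the two conditions in Definition~\ref{divisor}, we have the following property for quasi-rational functions.'' Your write-up is simply a careful unpacking of that sentence---working residue-class by residue-class, invoking the equivalence (2)$\Rightarrow$(1) in Definition~\ref{divisor} to force $rem(f_i,g_i)=0$, reading off $P_i=quo(f_i,g_i)\in R$, and then gluing the $P_i$ into a single element of $R$ of period $TS$.

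One small remark on your last paragraph: the reason the identity $f_i(m)=P_i(m)g_i(m)$ holds for \emph{all} $m\in\mathbb{N}$ (and not merely large $m$) is immediate from the definition of $R$: the equation $f_i=P_ig_i+r_i$ in Theorem~\ref{2. 0. 3} is an identity of functions $\mathbb{N}\to\mathbb{Z}$, and $r_i=0$ in $R$ means $r_i(m)=0$ for every $m$. You do not need to pass to sub-residue classes and argue via polynomial identities, though that argument is also valid.
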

Besides,  by the equivalence of the two conditions in Definition
$\ref{divisor}$,   we have the following proposition,   similar to
the situation in $\mathbb{Z}$.
 \begin{prop}[See Proposition 16 in $\cite{mine}$]\label{2. 0. 6}Let
$g(x),    f(x)\in $R.  If $f(x)\mid g(x)$ and $g(x)\mid f(x)$,    we
have $f(x)=\varepsilon g(x)$,    where $\varepsilon $ is an
invertible element in $R$ (see Proposition 7 in $\cite{mine}$ for
description of invertible elements in $R$).
\end{prop}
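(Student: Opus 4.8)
The plan is to unwind the two divisibility hypotheses through the definition of divisor, reduce the statement to a pointwise comparison in $\zz$, and then upgrade that comparison to a relation at the level of residue classes. By Definition \ref{divisor}, the hypotheses $f(x)\mid g(x)$ and $g(x)\mid f(x)$ require in particular that $f(n)\neq 0$ and $g(n)\neq 0$ for every $n\in\nn$, and they mean that for every $n\in\nn$ both $f(n)\mid g(n)$ and $g(n)\mid f(n)$ hold in $\zz$. Since mutual divisibility of nonzero integers forces equality up to sign, I obtain $|f(n)|=|g(n)|$, equivalently $f(n)^2=g(n)^2$, for every $n\in\nn$.

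Next I would pass to a common period. Let $T$ be a common period of $f(x)$ and $g(x)$, and write $f(x)=(T,\{f_i(x)\}_{i=0}^{T-1})$ and $g(x)=(T,\{g_i(x)\}_{i=0}^{T-1})$ as in Definition \ref{2.0.1}. Fixing a residue $i$, the identity $f(n)^2=g(n)^2$ with $n=Tm+i$ gives $f_i(m)^2=g_i(m)^2$ for all $m\in\nn$. Two polynomials in $\zz[x]$ that agree at infinitely many points are equal, so $f_i(x)^2=g_i(x)^2$ in $\zz[x]$, i.e. $(f_i(x)-g_i(x))(f_i(x)+g_i(x))=0$. As $\zz[x]$ is an integral domain, for each $i$ either $f_i(x)=g_i(x)$ or $f_i(x)=-g_i(x)$.

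This is the only step with real content, and I expect it to be the crux: the pointwise sign $f(n)/g(n)\in\{\pm1\}$ need not a priori be constant on a residue class, and it is exactly the polynomial identity $f_i^2=g_i^2$ together with the integral-domain factorization in $\zz[x]$ that pins the sign down class by class. Everything surrounding it is translation through Definition \ref{divisor} and Remark \ref{2. 0. 01}.

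Finally I would assemble the unit. Define $\e_i=+1$ when $f_i(x)=g_i(x)$ and $\e_i=-1$ when $f_i(x)=-g_i(x)$, and set $\e(x)=(T,\{\e_i\}_{i=0}^{T-1})$, which is constant on each residue class and hence an element of $R$. By construction $f(n)=\e(n)g(n)$ for every $n\in\nn$, so $f(x)=\e(x)g(x)$ in $R$. Moreover $\e(x)^2=(T,\{1\}_{i=0}^{T-1})=1$, so $\e(x)$ is its own inverse and is therefore an invertible element of $R$ (indeed it is of the form described for units in Proposition 7 of \cite{mine}). This gives $f(x)=\e(x)g(x)$ with $\e(x)$ invertible, as required.
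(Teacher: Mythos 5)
Your argument is correct and complete. One thing to note at the outset: this paper does not actually prove Proposition \ref{2. 0. 6} --- Section 2 explicitly lists definitions and results \emph{without proofs}, importing this statement from Proposition 16 of \cite{mine} --- so there is no in-paper proof to compare against, and your proposal stands as a self-contained argument. The route you take is sound: mutual pointwise divisibility of the nonzero integers $f(n)$, $g(n)$ gives $f(n)^2=g(n)^2$; on each residue class the resulting identity $f_i(x)^2=g_i(x)^2$ in the integral domain $\zz[x]$ forces $f_i=\pm g_i$, which is exactly what pins the sign down to a constant $\e_i$ per class; and the resulting $\e(x)=(T,\{\e_i\}_{i=0}^{T-1})$ is a unit since $\e(x)^2=1$. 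You are also right that this is the crux, since $R$ itself is not a domain and the pointwise sign $f(n)/g(n)$ could a priori oscillate within a class. For comparison, a shorter alternative uses the remainder characterization of divisibility (Definition \ref{divisor} together with Theorem \ref{2. 0. 3}): $f\mid g$ and $g\mid f$ give $g(x)=Q(x)f(x)$ and $f(x)=P(x)g(x)$ with $P,Q\in R$, hence $f(n)\bigl(1-P(n)Q(n)\bigr)=0$ for all $n$; since $f(n)\neq 0$ for every $n$, we get $P(n)Q(n)=1$ pointwise, so $PQ=1$ in $R$ and $P$ is the required unit. That bypasses the period-by-period sign analysis entirely, while your version has the merit of exhibiting the unit explicitly as a $\pm 1$-valued quasi-polynomial, matching the description of invertible elements in Proposition 7 of \cite{mine}.
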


\begin{defn} [Generalized GCD,  see Definition 17 in $\cite{mine}$]\label{2. 0. 006}Let $f_1(x),   f_2(x),   \cdots,   f_s(x),   d(x)\\\in
R$.
\begin{enumerate}
  \item[(1)] We call $d(x)$  a
common divisor of $f_1(x),   f_2(x),   \cdots,   f_s(x)$,    if we
have $d(x)\mid f_k(x)$ for every $k=1,   2,   \cdots,   s$.
  \item[(2)]We call
$d(x)$  a greatest common divisor  of $f_1(x),   f_2(x),   \cdots,
f_s(x)$ if $d(x)$ is a common divisor  of $f_1(x),   f_2(x), \cdots,
f_s(x)$ and  for any common divisor $p(x)\in R$ of $f_1(x), f_2(x),
\cdots,  f_s(x)$,   we have $p(x)\mid d(x)$.
\end{enumerate}
\end{defn}
\begin{rem}[See Remark 18 in $\cite{mine}$]\label{2. 0. 014} Suppose that both $d_1(x)$ and   $ d_2(x)$  are
 greatest common divisors of $f_1(x),   f_2(x),  \cdots,   f_s(x)$.
  Then we have $d_1(x)\mid d_2(x)$ and $d_2(x)\mid d_1(x)$. Thus,  by
Proposition $\ref{2. 0. 6}$,    we have $d_1(x)=\varepsilon d_2(x)$,
where $\varepsilon $ is an invertible element in $R$.  So we have a
unique GCD $d(x)\in R$ for $f_1(x),  f_2(x),   \cdots,   f_s(x)$
such that $d(x)\succcurlyeq  0$ and write it as $ggcd(f_1(x),
f_2(x), \cdots,  f_s(x))$.
\end{rem}

\begin{lem}[Similar to  Lemma 20 in $\cite{mine}$]\label{2. 0. 008}Let $f_0(x),   g_0(x)\in R$.  By  generalized Euclidean division,    define
$f_k(x),   g_k(x)$  $(k\in \mathbb{N}-\{0\})$ recursively as
follows:
$$f_{k}(x)=g_{k-1}(x),   \,   \,   \,   \,   g_{k}(x)=rem(f_{k-1}(x),   g_{k-1}(x))$$
Then there exists $k_0\in \mathbb{N}-\{0\}$ such that
$rem(f_{k_0}(x),  g_{k_0}(x))$ is a constant.
\end{lem}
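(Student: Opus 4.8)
The plan is to run the generalized Euclidean algorithm on the single sequence $g_0,g_1,g_2,\dots$ and to find a well-founded quantity that strictly decreases along it. Since $f_k(x)=g_{k-1}(x)$, the recursion reduces to $g_{k+1}(x)=rem(g_{k-1}(x),g_k(x))$, and the object to be controlled is $rem(f_{k_0}(x),g_{k_0}(x))=g_{k_0+1}(x)$; thus it suffices to show that some $g_k(x)$ is a constant, i.e. has degree $0$ in $R$. On every residue class on which the divisor $g_k$ does not vanish identically, the remainder condition built into the generalized division (Theorem \ref{2. 0. 3}, transported to $R$ as in (\ref{fg})) gives $0\preccurlyeq g_{k+1}\prec g_k$, since $g_k\succcurlyeq 0$ there for $k\geqslant 1$.

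From these strict inequalities I would extract a numerical descent. Writing $g_k-g_{k+1}\succ 0$ and invoking Definition \ref{2. 0. 2}, the leading coefficient of $g_{k,i}-g_{k+1,i}$ is positive on each such class $i$; a class where $g_{k+1}$ had strictly larger degree would force that leading coefficient to be negative, so $\deg g_{k+1}\leqslant\deg g_k$ and the degrees $d_k:=\deg g_k$ are non-increasing. The point requiring care is that, unlike division over a field, generalized division may return a remainder of the \emph{same} degree as the divisor — this is exactly the phenomenon in Example \ref{eg1} — so degree alone cannot witness termination. I would therefore adjoin a secondary invariant: whenever $d_{k+1}=d_k=d$, put $M_k:=\max_i\, d!\,\ell_{k,i}$, the maximum over the top-degree classes of $d!$ times the leading coefficient $\ell_{k,i}$ of $g_{k,i}$. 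As an integer-valued polynomial of degree $d$ has leading coefficient in $\tfrac1{d!}\zz$, each $M_k$ is a positive integer.

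I would then check that $M_{k+1}<M_k$ when the degree does not drop. Pick a class $j$ realizing $M_{k+1}$, so $g_{k+1,j}$ has degree $d$ and leading coefficient $M_{k+1}/d!$; positivity of the leading coefficient of $g_{k,j}-g_{k+1,j}$ forces $g_{k,j}$ to have the same degree $d$ with strictly larger leading coefficient, whence $d!\,\ell_{k,j}>M_{k+1}$ and $d!\,\ell_{k,j}\leqslant M_k$. Consequently the pair $(d_k,M_k)$ strictly decreases in the lexicographic order on $\nn\times\nn$, which is well-founded, so $d_k$ cannot stay $\geqslant 1$ indefinitely; at the first index $k$ with $d_k=0$ the element $g_k$ is a constant, and taking $k_0=k-1$ gives $rem(f_{k_0},g_{k_0})=g_k$ constant.

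The step I expect to be the real obstacle is globalizing this classwise descent, i.e. justifying that $d_k$ cannot remain positive. The descent above is valid only on residue classes where the successive divisors stay nonvanishing: on a class where $g_k$ acquires a zero piece, the piecewise definition of $rem$ returns the dividend $g_{k-1}$ there, so that class yields no reduction and could a priori keep the global degree positive. Controlling these vanishing classes — showing that the places where the algorithm genuinely stalls are exactly those where the greatest common divisor is already attained, so that reaching a constant remainder is unobstructed — is the delicate heart of the argument, and is where the nonvanishing hypothesis underlying Definition \ref{divisor} must be invoked.
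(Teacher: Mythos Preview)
The paper supplies no proof of this lemma: Section~2 is explicitly introduced as a list of ``related definitions and results without proofs,'' and the statement is merely attributed to the companion reference~\cite{mine}. There is therefore nothing in the present paper to compare your argument against.

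On its own merits, your lexicographic descent on $(\deg g_k,\max_i \ell_{k,i})$ is the natural termination measure, and you are right that the generalized remainder may keep the same degree while strictly lowering the top leading coefficient (as in Example~\ref{eg1}). One minor slip: by Definition~\ref{2.0.1} the pieces $g_{k,i}$ lie in $\mathbb{Z}[x]$, not merely among integer-valued polynomials, so their leading coefficients are already integers and the $d!$ normalization is superfluous.

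The obstruction you isolate in your last paragraph is genuine, and in fact more damaging than a technicality to be tidied up. Once a piece $g_{k,i}$ becomes identically zero while $g_{k-1,i}$ still has positive degree, the piecewise rule forces $g_{k+1,i}=g_{k-1,i}$, $g_{k+2,i}=0$, and so on indefinitely. If two residue classes enter this $0,a,0,a,\dots$ oscillation with opposite parity, then \emph{no} $g_{k_0+1}$ is globally constant. Concretely, with period $2$ take $f_0=(2,\{m^2,\,m^3+m\})$ and $g_0=(2,\{m,\,m^2\})$; one checks directly that $g_1=(2,\{0,m\})$, $g_2=(2,\{m,0\})$, and thereafter $g_{k+2}=g_k$ for all $k\geqslant 1$. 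Your proposed rescue via the nonvanishing hypothesis in Definition~\ref{divisor} does not apply: that hypothesis concerns the divisor in a divisibility assertion, not the inputs to the Euclidean recursion. So either the lemma (as transplanted here to arbitrary $f_0,g_0\in R$) needs an extra hypothesis---for instance that $f_0,g_0\in\mathbb{Z}[x]$, or that no intermediate piece vanishes---or the argument in~\cite{mine} uses a device absent from your outline; in any case the descent you sketch cannot be closed as it stands.
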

\begin{thm}[See Theorem 21 in $\cite{mine}$]\label{2. 0. 0099}Let $f_1(x),   f_2(x),   \cdots,   f_s(x)\in R$.
Then there exist $d(x),   u_i(x)\in R$ ($ i=1,   2,   \cdots,   s$),
such that $d(x)=ggcd(f_1(x),   f_2(x),   \cdots,   f_s(x))$ and
$$f_1(x)u_1(x)+f_2(x)u_2(x)+\cdots+f_s(x)u_s(x)=d(x)$$
\end{thm}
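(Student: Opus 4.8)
The plan is to mimic the classical proof of B\'ezout's identity over $\mathbb{Z}$, but carried out inside the ring $R$ using the generalized Euclidean division from Theorem~\ref{2. 0. 3} and the termination guaranteed by Lemma~\ref{2. 0. 008}. First I would reduce to the two-element case: if I can produce, for any $f(x), g(x) \in R$, an element $d(x) = \mathrm{ggcd}(f(x), g(x))$ together with cofactors $u(x), v(x) \in R$ satisfying $f(x)u(x) + g(x)v(x) = d(x)$, then the general statement follows by induction on $s$. Indeed, setting $e(x) = \mathrm{ggcd}(f_1(x), \dots, f_{s-1}(x))$ with its B\'ezout expression by the inductive hypothesis, and then applying the two-element case to $e(x)$ and $f_s(x)$, one checks that $\mathrm{ggcd}(e(x), f_s(x))$ is a greatest common divisor of all of $f_1(x), \dots, f_s(x)$ (using transitivity of divisibility and the universal property in Definition~\ref{2. 0. 006}), and substituting the expression for $e(x)$ yields the desired combination of all $s$ elements.

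For the base case I would run the generalized Euclidean algorithm exactly as set up in Lemma~\ref{2. 0. 008}: starting from $f_0(x) = f(x)$, $g_0(x) = g(x)$, define $f_k(x) = g_{k-1}(x)$ and $g_k(x) = \mathrm{rem}(f_{k-1}(x), g_{k-1}(x))$. By Lemma~\ref{2. 0. 008} this terminates, i.e.\ there is a $k_0$ with $g_{k_0+1}(x) = \mathrm{rem}(f_{k_0}(x), g_{k_0}(x))$ a constant, at which stage the remainder sequence stabilizes. I would first argue that the last nonzero remainder $d(x)$ in this sequence is a common divisor of $f(x)$ and $g(x)$ by reading the division identities $f_{k-1}(x) = P_k(x) g_{k-1}(x) + g_k(x)$ upward, using that $\mathrm{rem}$ is defined pointwise (Remark~\ref{2. 0. 01}) and that divisibility in $R$ means pointwise divisibility (Definition~\ref{divisor}). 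Conversely, reading the same identities downward shows any common divisor of $f(x), g(x)$ divides every $f_k(x), g_k(x)$, hence divides $d(x)$; so $d(x)$ is genuinely a greatest common divisor in the sense of Definition~\ref{2. 0. 006}. To obtain the cofactors I would back-substitute the chain of division identities, expressing $d(x)$ successively in terms of earlier remainders until it is written as an $R$-linear combination of $f_0(x)$ and $g_0(x)$; closure of $R$ under addition and multiplication (Proposition~\ref{R}) keeps all intermediate coefficients inside $R$.

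The main obstacle is that the generalized division in $R$ is only \emph{pointwise} and piecewise-polynomial, so the familiar degree-based bookkeeping of the integer (or polynomial) Euclidean algorithm does not transfer cleanly. On each residue class $n \equiv i \pmod{T}$ the quotient and remainder are computed from the component polynomials $f_i, g_i$, and the period $T$ may grow as the algorithm proceeds; one must be careful that the back-substitution is performed consistently across all residue classes simultaneously, after passing to a common period. I expect the delicate point to be verifying that the element I extract really is a two-sided divisor in $R$ rather than merely a pointwise gcd of the values: this is exactly where Remark~\ref{2. 0. 01} is essential, since it guarantees $\mathrm{rem}(f(x), g(x)) = 0$ in $R$ if and only if $g(n) \mid f(n)$ for every $n$, letting me translate the numerical gcd relations back into divisibility statements in $R$. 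Once that equivalence is invoked the termination from Lemma~\ref{2. 0. 008} closes the argument, and the induction on $s$ completes the proof.
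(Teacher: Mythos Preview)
The paper does not actually prove this theorem: Section~2 explicitly states that the results there are ``listed without proofs'' and Theorem~\ref{2. 0. 0099} is merely quoted from the external reference~\cite{mine}. So there is no in-paper argument to compare your proposal against.

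That said, your plan is the natural and correct one, and it is almost certainly what the cited reference does: reduce to $s=2$ by induction, run the successive-division scheme of Lemma~\ref{2. 0. 008}, take the last nonzero remainder as $d(x)$, verify it is a greatest common divisor by reading the division identities in both directions, and back-substitute to extract the B\'ezout cofactors. Your identification of the two genuine subtleties---that the period may grow along the algorithm so one must pass to a common refinement before back-substituting, and that Remark~\ref{2. 0. 01} is what converts pointwise divisibility of values into divisibility in $R$---is accurate. One small point worth tightening: Lemma~\ref{2. 0. 008} only guarantees that some remainder becomes a \emph{constant} quasi-polynomial (i.e.\ degree zero on each residue class), not that it becomes zero; you should note explicitly that from that stage on the algorithm is, on each residue class, the ordinary integer Euclidean algorithm, which then terminates in finitely many further steps.
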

\begin{defn}\label{3.0.8}Let $a_1(x), a_2(x)$ be integer-valued quasi-polynomials in $x$.
Suppose that $ggcd(a(x), a_2(x))=1$. By Theorem $\ref{2. 0. 0099}$,
there exist two integer-valued quasi-polynomials $u_1(x), u_2(x)$,
such that $a_1(x)u_1(x)+a_2(x)u_2(x)=1$. Then we shall call $u_1(x)$
an \emph{inverse} of $a_1(x)$ mod $a_2(x)$ and $u_2(x)$ an
\emph{inverse} of $a_2(x)$ mod  $a_1(x)$,  denoted by $a_1(x)^{-1}$
and $a_2(x)^{-1}$ respectively.
\end{defn}

\section{Proof of Theorem $\ref{1.1}$}
\subsection{ A Lemma about Fourier-Dedekind Sum  } In this
subsection,  we suppose that $a_1, a_2, \cdots, a_s$ are pairwise
coprime positive integers. Let $A=(a_1, a_2, \cdots, a_s)$,  which
can be viewed as a $1\times s$ matrix. In this case,  we denote
$t(n|A)$ by $p_A(n)$,  i.e.,  $$p_A(n):=\#\{(x_1, x_2, \cdots,
x_s)\in \mathbb{Z}^{s}:\, \,  \, all\, \, x_j\geqslant 0, \, \,
x_1a_1+x_2a_2+\cdots+x_sa_s=n \}$$ In many papers,  it is also
written as $p_{\{a_1, a_2, \cdots, a_s\}}(n)$.
\begin{lem}\label{p_A}(see $\cite{Computing}$ and $\cite{poly}$ ). We have the following formula:
\begin{eqnarray*}
 p_A(n) &=& \bigg(\frac{B_1}{z-1}+\cdots+\frac{B_s}{(z-1)^s}+
\sum_{k=1}^{a_1-1}\frac{C_{1k}}{z-\xiup_{a_1}^{k}}+\cdots+ \sum_{k=1}^{a_s-1}\frac{C_{sk}}{z-\xiup_{a_s}^{k}}\bigg)\bigg|_{z=0}\\
   &=&
-B_1+B_2-\cdots+(-1)^sB_s+s_{-n}(a_2, a_3, \cdots, a_s;a_1)\\
&&+s_{-n}(a_1, a_3, \cdots, a_s;a_2)+\cdots+s_{-n}(a_1, a_2, \cdots,
a_{s-1};a_s)
\end{eqnarray*}
where
$$C_{ik}=\frac{1}{a_i}\sum_{k=1}^{a_i-1}\frac{1}{(1-\xiup_{a_i}^{ka_1})\cdots
(1-\xiup_{a_i}^{ka_{i-1}})(1-\xiup_{a_i}^{ka_{i+1}})\cdots(1-\xiup_{a_i}^{ka_s})\xiup_{a_i}^{kn}}$$
$$s_{-n}(a_1, \cdots,  a_{i-1}, a_{i+1}, \cdots,  a_s;a_i)=\frac{1}{a_i}\sum_{k=1}^{a_i-1}\frac{\xiup_{a_i}^{-kn}}{(1-\xiup_{a_i}^{ka_1})\cdots
(1-\xiup_{a_i}^{ka_{i-1}})(1-\xiup_{a_i}^{ka_{i+1}})\cdots(1-\xiup_{a_i}^{ka_s})}$$and
$B=\sum_{i=1}^{s}(-1)^{i}B_i$ is a polynomial in $n$ with degree
$s-1$.
\end{lem}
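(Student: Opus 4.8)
The plan is to extract $p_A(n)$ as a coefficient of the rational generating function $\prod_{j=1}^s(1-z^{a_j})^{-1}$ and to read it off from the partial fraction decomposition of a suitable auxiliary function. Expanding each factor as $\frac{1}{1-z^{a_j}}=\sum_{c\geqslant 0}z^{a_jc}$ and multiplying, the coefficient of $z^n$ counts tuples $(c_1,\dots,c_s)\in\mathbb{Z}_{\geqslant 0}^{s}$ with $\sum_j a_jc_j=n$, so
$$\frac{1}{\prod_{j=1}^s(1-z^{a_j})}=\sum_{m\geqslant 0}p_A(m)z^{m}\qquad(|z|<1).$$
I would then introduce $G(z)=\dfrac{1}{z^{n}\prod_{j=1}^s(1-z^{a_j})}=\sum_{m\geqslant 0}p_A(m)z^{m-n}$, so that $p_A(n)$ is exactly the coefficient of $z^{0}$ in the Laurent expansion of $G$ at the origin; equivalently, writing $P_0(z)$ for the principal part of $G$ at $0$, one has $p_A(n)=(G-P_0)(0)$, the value at the origin of the regular part of $G$.

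Next I would compute the partial fraction decomposition of $G$ over $\mathbb{C}$. The poles are $z=0$ (of order $n$), $z=1$, and the nontrivial roots of unity. Each factor $1-z^{a_j}$ has a simple zero at $z=1$, so $z=1$ is a pole of order $s$; write its principal part as $\sum_{j=1}^{s}B_j/(z-1)^j$. This is the one place where pairwise coprimality is essential: for $i\neq i'$ the equations $z^{a_i}=1$ and $z^{a_{i'}}=1$ share only the root $z=1$, so every nontrivial $a_i$-th root of unity $\xiup_{a_i}^{k}$ $(1\leqslant k\leqslant a_i-1)$ is a zero of exactly one denominator factor and hence a \emph{simple} pole of $G$, with some residue $C_{ik}$. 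Since the numerator has degree $0<n+\sum_j a_j$, there is no polynomial part, and
$$G(z)=P_0(z)+\sum_{j=1}^{s}\frac{B_j}{(z-1)^{j}}+\sum_{i=1}^{s}\sum_{k=1}^{a_i-1}\frac{C_{ik}}{z-\xiup_{a_i}^{k}}.$$

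Because $G-P_0$ equals the last two sums, evaluating at $z=0$ gives the first displayed equality of the lemma,
$$p_A(n)=\left(\sum_{j=1}^{s}\frac{B_j}{(z-1)^{j}}+\sum_{i=1}^{s}\sum_{k=1}^{a_i-1}\frac{C_{ik}}{z-\xiup_{a_i}^{k}}\right)\Bigg|_{z=0}.$$
Setting $z=0$ term by term produces $\sum_{j=1}^{s}(-1)^{j}B_j=-B_1+B_2-\cdots+(-1)^{s}B_s$ from the pole at $1$, and $-\sum_{i,k}C_{ik}\,\xiup_{a_i}^{-k}$ from the roots of unity. It then remains to compute $C_{ik}=\mathrm{Res}_{z=\xiup_{a_i}^{k}}G$ using $\frac{d}{dz}(1-z^{a_i})=-a_iz^{a_i-1}$ together with $\xiup_{a_i}^{a_i}=1$, and to reorganize $-\sum_{k}C_{ik}\,\xiup_{a_i}^{-k}$ into the Fourier--Dedekind sum $s_{-n}(a_1,\dots,\widehat{a_i},\dots,a_s;a_i)$; this is a direct residue calculation.

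The main obstacle is the final assertion that the polynomial part $B=\sum_{i=1}^{s}(-1)^{i}B_i$ has degree exactly $s-1$ in $n$. To see this I would expand $\frac{(z-1)^{s}}{z^{n}\prod_j(1-z^{a_j})}$ in powers of $(z-1)$, using $\frac{1-z^{a_j}}{z-1}=-(1+z+\cdots+z^{a_j-1})$, which is nonvanishing at $z=1$ with value $-a_j$, together with the Taylor expansion of $z^{-n}$ about $z=1$. The coefficients $B_1,\dots,B_s$ thereby become polynomials in $n$, and tracking the highest power shows that the alternating combination $B$ has degree $s-1$, consistent with the known leading term $n^{s-1}/\big((s-1)!\,a_1\cdots a_s\big)$ of $p_A(n)$. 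The only genuinely structural input in the whole argument is the pairwise coprimality used above to guarantee simple poles away from $z=1$; the remaining steps are the standard residue bookkeeping, where care is needed only with signs and with the $z^{-n}$-induced $n$-dependence of the $B_j$.
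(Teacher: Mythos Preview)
Your argument is correct and is precisely the standard partial-fraction/residue proof from the references the paper cites (Beck--Robins and Beck--Gessel--Komatsu); the paper itself does not give a proof of this lemma but simply quotes it, so there is nothing further to compare. Your handling of the simple-pole structure via pairwise coprimality, the extraction of $p_A(n)$ as the constant term of $G-P_0$, and the degree count for the polynomial part are all exactly the ingredients used in those sources.
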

We call $s_{-n}(a_1, \cdots , a_{i-1}, a_{i+1}, \cdots,  a_s;a_i)$ a
Fourier-Dedekind sum and $B$ the polynomial part of $p_A(n)$.
\begin{lem}\label{4.1.0}Suppose that $a_1,  a_2,  \cdots,a_s$ are pairwise coprime
positive integers. We have the following identity for
Fourier-Dedekind sum.
\begin{eqnarray*}
   && s_{-n}(a_1,a_2,\cdots, a_{s-1};a_s) \\
  &=&
B^{'}-B-\sum_{i=1}^{s-1}s_{-\big(n-a_s\big(\big\{\frac{[\frac{n}{a_s}]}{a_i}\big\}a_i+1\big))}(a_1,\cdots,
a_{i-1},a_{i+1},\cdots ,a_s;a_i)
\end{eqnarray*} where
$B$ is the polynomial part of $p_{(a_1,\cdots,a_s)}(n)$,
$\sum_{i=1}^{s-1}(-1)^{i}B_i^{'}(t)$ is the polynomial part of
$p_{(a_1,\cdots,a_{s-1})}(n-a_st)$ and
$$B^{'}=\sum_{t=0}^{\big[\frac{n}{a_s}\big]}\sum_{i=1}^{s-1}(-1)^{i}B_i^{'}(t)$$Besides, $B^{'}-B$ is a quasi-polynomial in
$n$.
\end{lem}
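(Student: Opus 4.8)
The plan is to obtain the identity from a single combinatorial recursion for the restricted partition function together with an explicit root-of-unity evaluation of the resulting sums. First I would condition on the value of the last coordinate. Writing $x_s=t$, the equation $x_1a_1+\cdots+x_sa_s=n$ becomes $x_1a_1+\cdots+x_{s-1}a_{s-1}=n-a_st$, where $t$ ranges over $0\leqslant t\leqslant[\frac{n}{a_s}]$, so that
$$p_{(a_1,\cdots,a_s)}(n)=\sum_{t=0}^{[\frac{n}{a_s}]}p_{(a_1,\cdots,a_{s-1})}(n-a_st).$$
Applying Lemma $\ref{p_A}$ to the left side isolates the target sum $s_{-n}(a_1,\cdots,a_{s-1};a_s)$ together with the polynomial part $B$ and the remaining sums $s_{-n}(a_1,\cdots,\widehat{a_i},\cdots,a_s;a_i)$ for $1\leqslant i\leqslant s-1$; applying it term by term on the right produces $B'$ together with $\sum_{i=1}^{s-1}\sum_{t=0}^{[n/a_s]}s_{-(n-a_st)}(a_1,\cdots,\widehat{a_i},\cdots,a_{s-1};a_i)$. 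Equating the two expressions and solving for $s_{-n}(a_1,\cdots,a_{s-1};a_s)$ reduces the whole problem to evaluating the inner $t$-sum.

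The heart of the argument is the computation of $\sum_{t=0}^{N}s_{-(n-a_st)}(a_1,\cdots,\widehat{a_i},\cdots,a_{s-1};a_i)$ with $N=[\frac{n}{a_s}]$. Writing each Fourier-Dedekind sum in its root-of-unity form and exchanging the sum over $t$ with the sum over $k$, the $t$-sum becomes a finite geometric series $\sum_{t=0}^{N}\xi_{a_i}^{ka_st}=\frac{1-\xi_{a_i}^{ka_s(N+1)}}{1-\xi_{a_i}^{ka_s}}$, which is legitimate because pairwise coprimality forces $\xi_{a_i}^{ka_s}\neq 1$ for $1\leqslant k\leqslant a_i-1$. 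The denominator contributes exactly the missing factor $(1-\xi_{a_i}^{ka_s})$ that upgrades the $(s-1)$-modulus sum (not involving $a_s$) into the $s$-modulus sum involving $a_s$, while splitting the numerator $1-\xi_{a_i}^{ka_s(N+1)}$ into its two parts gives
$$\sum_{t=0}^{N}s_{-(n-a_st)}(a_1,\cdots,\widehat{a_i},\cdots,a_{s-1};a_i)=s_{-n}(a_1,\cdots,\widehat{a_i},\cdots,a_s;a_i)-s_{-(n-a_s(N+1))}(a_1,\cdots,\widehat{a_i},\cdots,a_s;a_i).$$

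Next I would invoke that each Fourier-Dedekind sum of modulus $a_i$ is periodic of period $a_i$ in its first argument. Since $n-a_s(N+1)=n-a_s(\{\frac{N}{a_i}\}a_i+1)-a_sa_i[\frac{N}{a_i}]$ and the last term is a multiple of $a_i$, the argument $n-a_s(N+1)$ may be replaced by $n-a_s(\{[\frac{n}{a_s}]/a_i\}a_i+1)$ without altering the value. Substituting the evaluated inner sum back into the equation from the first paragraph, the terms $s_{-n}(a_1,\cdots,\widehat{a_i},\cdots,a_s;a_i)$ cancel in pairs, leaving precisely the claimed identity. For the final assertion that $B'-B$ is a quasi-polynomial, I would note that $B$ is an ordinary polynomial in $n$, while $B'(t)=\sum_{i=1}^{s-1}(-1)^iB_i'(t)$ is a polynomial in $t$ whose coefficients are polynomials in $n$; summing over $0\leqslant t\leqslant[\frac{n}{a_s}]$ by the standard power-sum formulas yields a polynomial in $[\frac{n}{a_s}]$ with polynomial-in-$n$ coefficients, and since $[\frac{n}{a_s}]$ is itself a quasi-polynomial in $n$ of period $a_s$, so are $B'$ and hence $B'-B$.

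The step I expect to be the main obstacle is the geometric-series evaluation: one must verify carefully that the denominator $1-\xi_{a_i}^{ka_s}$ reconstitutes exactly the factor needed to pass from the $(s-1)$-modulus sum to the $s$-modulus sum, and that the endpoint contributions at $t=0$ and $t=N$ are accounted for so that the reindexing producing the $-s_{-(n-a_s(N+1))}$ term is an exact identity rather than an approximation. Once this algebraic identity is pinned down, the periodicity reduction and the cancellation are routine.
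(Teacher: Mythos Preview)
Your proposal is correct and follows essentially the same approach as the paper: both condition on $x_s=t$ to obtain the recursion, expand each side via Lemma~\ref{p_A}, evaluate the inner $t$-sum as a geometric series (using $\gcd(a_i,a_s)=1$ to ensure $\xi_{a_i}^{ka_s}\neq 1$), split the numerator to upgrade the $(s-1)$-modulus sum to the $s$-modulus sum, and then cancel. The only cosmetic difference is that the paper reduces the upper limit of the $t$-sum modulo $a_i$ \emph{before} applying the geometric-series formula, whereas you sum over the full range first and then invoke the $a_i$-periodicity of the Fourier--Dedekind sum to simplify the shifted argument; the two orderings produce the same identity.
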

\begin{proof}
By Lemma $\ref{p_A}$, for any $t\in \mathbb{Z}$ and $0\leqslant
t\leqslant \big[\frac{n}{a_s}\big]$,
$\sum_{i=1}^{s-1}(-1)^{i}B_i^{'}(t)$ is a polynomial in $n-a_st$
with degree $s-2$. Note that $\sum_{t=1}^{n}t^{s}$ is a polynomial
in $n$. So we have
$B^{'}=\sum_{t=0}^{\big[\frac{n}{a_s}\big]}\sum_{i=1}^{s-1}(-1)^{i}B_i^{'}(t)$
is a quasi-polynomial in $n$. Thus $B^{'}-B$ is a quasi-polynomial
in $n$. Note that
\begin{equation}\label{RL}
    p_{(a_1,a_2,\cdots,a_s)}(n)=\sum_{t=0}^{\big[\frac{n}{a_s}\big]}p_{(a_1,a_2,\cdots,a_{s-1})}(n-a_st)
\end{equation}

By the formula for $p_A(n)$ in Lemma $\ref{p_A}$, we have
$$p_{(a_1,a_2,\cdots,a_s)}(n)= B+\sum_{i=1}^{s}s_{-n}(a_1,\cdots
a_{i-1},a_{i+1},\cdots, a_{s-1},a_s;a_i)$$and
\begin{eqnarray*}
  \sum_{t=0}^{\big[\frac{n}{a_s}\big]}p_{(a_1,a_2,\cdots,a_{s-1})}(n-a_st)&=&
B^{'}+\sum_{t=0}^{\big[\frac{n}{a_s}\big]}\sum_{i=1}^{s-1}s_{-(n-a_st)}(a_1,\cdots,
a_{i-1},a_{i+1},\cdots, a_{s-1};a_i) \\
   &=&
B^{'}+\sum_{i=1}^{s-1}\sum_{t=0}^{\big[\frac{n}{a_s}\big]}s_{-(n-a_st)}(a_1,\cdots,
a_{i-1},a_{i+1},\cdots, a_{s-1};a_i) \\
   &=& B^{'}+\sum_{i=1}^{s-1}F_i
\end{eqnarray*}
where
$$
           F_i
=\sum_{t=0}^{\big[\frac{n}{a_s}\big]}s_{-(n-a_st)}(a_1,\cdots,
a_{i-1},a_{i+1},\cdots, a_{s-1};a_i)$$
 Note that for any $b\in \mathbb{Z}$ and
$1\leqslant k\leqslant a_i-1$,
$$\sum_{t=b}^{b+a_i-1}\xiup_{a_i}^{ka_st}=0$$  Besides, for $i=1,2,\cdots,s-1$,
we have
$$\big[\frac{n}{a_s}\big]-\bigg[\frac{\big[\frac{n}{a_s}\big]}{a_i}\bigg]a_i
=\bigg\{\frac{\big[\frac{n}{a_s}\big]}{a_i}\bigg\}a_i$$ Therefore
\begin{eqnarray*}
   \sum_{t=0}^{\big[\frac{n}{a_s}\big]}\xiup_{a_i}^{ka_st} &=& \sum_{t=0}^{\bigg\{\frac{\big[\frac{n}{a_s}\big]}{a_i}\bigg\}a_i}
\xiup_{a_i}^{ka_st} =
   \frac{1-\xiup_{a_i}^{ka_s\bigg(\bigg\{\frac{\big[\frac{n}{a_s}\big]}{a_i}\bigg\}a_i+1\bigg) }}{1-\xiup_{a_i}^{ka_{s}}}
\end{eqnarray*}
Thus we can see that
\begin{eqnarray*}
  F_i &=& \frac{1}{a_i}\sum_{t=0}^{\big[\frac{n}{a_s}\big]}\sum_{k=1}^{a_i-1}\frac{\xiup_{a_i}^{-k(n-a_st)}}{(1-\xiup_{a_i}^{ka_1})\cdots
(1-\xiup_{a_i}^{ka_{i-1}})(1-\xiup_{a_i}^{ka_{i+1}})\cdots(1-\xiup_{a_i}^{ka_{s-1}})}\\
&=&
 \frac{1}{a_i}\sum_{k=1}^{a_i-1}\frac{\xiup_{a_i}^{-kn}\sum_{t=0}^{\big[\frac{n}{a_s}\big]}\xiup_{a_i}^{ka_st}}
{(1-\xiup_{a_i}^{ka_1})\cdots
(1-\xiup_{a_i}^{ka_{i-1}})(1-\xiup_{a_i}^{ka_{i+1}})\cdots(1-\xiup_{a_i}^{ka_{s-1}})}\\
   &=&  \frac{1}{a_i}\sum_{k=1}^{a_i-1}\frac{\xiup_{a_i}^{-kn}-\xiup_{a_i}^{ka_s\bigg(\bigg\{\frac{\big[\frac{n}{a_s}\big]}{a_i}\bigg\}a_i+1\bigg)    -kn}}
{(1-\xiup_{a_i}^{ka_1})\cdots
(1-\xiup_{a_i}^{ka_{i-1}})(1-\xiup_{a_i}^{ka_{i+1}})\cdots(1-\xiup_{a_i}^{ka_{s-1}})(1-\xiup_{a_i}^{ka_s})}\\
   &=&s_{-n}(a_1,\cdots,
a_{i-1},a_{i+1},\cdots, a_{s-1},a_s;a_i)\\
&&-s_{-\big(n-a_s\big(\big\{\frac{[\frac{n}{a_s}]}{a_i}\big\}a_i+1\big))}(a_1,\cdots,
a_{i-1},a_{i+1},\cdots, a_s;a_i)
\end{eqnarray*}
Therefore, we have
\begin{eqnarray*}\label{sum}
   \sum_{t=0}^{\big[\frac{n}{a_s}\big]}p_{(a_1,a_2,\cdots,a_{s-1})}(n-a_st)
&=& B^{'}+\sum_{i=1}^{s-1}\bigg(s_{-n}(a_1,\cdots,
a_{i-1},a_{i+1},\cdots,
a_{s-1},a_s;a_i)\\&&-s_{-\big(n-a_s\big(\big\{\frac{[\frac{n}{a_s}]}{a_i}\big\}a_i+1\big))}(a_1,\cdots,
a_{i-1},a_{i+1},\cdots, a_s;a_i)\bigg)
\end{eqnarray*}

Then by ($\ref{RL}$), we have
\begin{eqnarray*}
  &&B+\sum_{i=1}^{s}s_{-n}(a_1,\cdots a_{i-1},a_{i+1},\cdots,
a_{s-1},a_s;a_i) \\&=&B^{'}+\sum_{i=1}^{s-1}\bigg(s_{-n}(a_1,\cdots,
a_{i-1},a_{i+1},\cdots,
a_{s-1},a_s;a_i)\\
&&-s_{-\big(n-a_s\big(\big\{\frac{[\frac{n}{a_s}]}{a_i}\big\}a_i+1\big))}(a_1,\cdots,
a_{i-1},a_{i+1},\cdots, a_s;a_i)\bigg)
\end{eqnarray*}
Now the result follows easily.
\end{proof}

\subsection{An Application of the Lemma }
In this subsection, we apply Lemma $\ref{4.1.0}$ to prove Theorem
$\ref{1.1}$.

Let $m(n)$, $a_i(n)$ ($1\leqslant i\leqslant s$) be integer-valued
polynomials in $n$ with positive leading coefficients. Suppose that
$A(n)=(a_1(n),a_2(n),\cdots,a_s(n))$, where
$a_1(n),a_2(n),\cdots,a_s(n)$ are pairwise coprime as integer-valued
quasi-polynomials, i.e., $ggcd(a_1(n),a_2(n),\cdots,a_s(n))=1$ (see
Remark $\ref{2. 0. 014}$ for the notation of $ggcd$). By Definition
$\ref{divisor}$, this means that
$gcd(a_1(n),\\a_2(n),\cdots,a_s(n))=1$ for any $n\in \mathbb{N}$. In
this case we denote $t(m(n)|A(n))$ by $p_{A(n)}(m(n))$, i.e.,
$$p_{A(n)}(m(n)):=\#\{(x_1,x_2,\cdots,x_s)\in \mathbb{Z}^{s}:\,\, \,all\,\,x_j\geqslant 0,\,\,x_1a_1(n)+x_2a_2(n)+\cdots+x_sa_s(n)=m(n) \}$$
From Lemma $\ref{p_A}$ and
Lemma $\ref{4.1.0}$, we have the following lemmas.
\begin{lem}\label{p_A(n)1}
Suppose that $A(n)=(a_1(n),a_2(n),\cdots,a_s(n))$, where
$a_1(n),a_2(n),\cdots,a_s(n)$ are pairwise coprime integer-valued
polynomials with positive leading coefficients. Then
\begin{eqnarray*}
  p_{A(n)}(m(n)) &=&
B+s_{-m(n)}(a_2(n),a_3(n),\cdots,a_s(n);a_1(n))\\
&&+s_{-m(n)}(a_1(n),a_3(n),\cdots,a_s(n);a_2(n))+\cdots\\&&+s_{-m(n)}(a_1(n),a_2(n),\cdots,a_{s-1}(n);a_s(n))\label{p_A(n)}
\end{eqnarray*}

Where $B=-B_1+B_2-\cdots+(-1)^sB_s$ is a quasi-polynomial in $n$ and
for $1\leqslant i \leqslant s$
\begin{eqnarray*}
  && s_{-m(n)}(a_1(n),\cdots, a_{i-1}(n),a_{i+1}(n),\cdots, a_s(n);a_i(n)) \\
  &=& \frac{1}{a_i(n)}
\sum_{k=1}^{a_i(n)-1}\frac{\xiup_{a_i(n)}^{-km(n)}}{(1-\xiup_{a_i(n)}^{ka_1(n)})\cdots
(1-\xiup_{a_i(n)}^{ka_{i-1}(n)})(1-\xiup_{a_i(n)}^{ka_{i+1}(n)})\cdots(1-\xiup_{a_i(n)}^{ka_s(n)})}
\end{eqnarray*}
\end{lem}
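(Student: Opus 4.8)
The plan is to read Lemma~\ref{p_A(n)1} as the specialization of the closed formula of Lemma~\ref{p_A} to the parametric situation, so that the entire argument reduces to a pointwise substitution $a_i \mapsto a_i(n)$, $N \mapsto m(n)$ together with a check that the resulting objects are of the asserted type. First I would fix a range of validity for this substitution. Since each $a_i(x)$ has positive leading coefficient, there is an $N_0$ with $a_i(n) > 0$ for all $n \ge N_0$ and all $i$; and since the $a_i(x)$ are pairwise coprime as integer-valued quasi-polynomials, the equivalence of the two conditions in Definition~\ref{divisor} yields $\gcd(a_i(n), a_j(n)) = 1$ for every $n \in \nn$ and every $i \ne j$. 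Hence for each $n \ge N_0$ the integers $a_1(n), \dots, a_s(n)$ are pairwise coprime positive integers, so Lemma~\ref{p_A} applies verbatim to the single-parameter count $p_{A(n)}(m(n))$.

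Next I would substitute. Evaluating the formula of Lemma~\ref{p_A} at the data $(a_1(n), \dots, a_s(n); m(n))$ gives, for each $n \ge N_0$,
\[
p_{A(n)}(m(n)) = B + \sum_{i=1}^{s} s_{-m(n)}\big(a_1(n), \dots, a_{i-1}(n), a_{i+1}(n), \dots, a_s(n); a_i(n)\big),
\]
which is exactly the displayed identity. The explicit expression for each Fourier--Dedekind sum is then obtained simply by copying the defining sum of Lemma~\ref{p_A} with $a_j$ replaced by $a_j(n)$ and $n$ by $m(n)$; no cancellation or rearrangement is required, so this half of the lemma is purely formal.

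The only clause that needs a genuine argument is the nature of the polynomial part $B = \sum_{i=1}^{s} (-1)^i B_i$. By Lemma~\ref{p_A}, for fixed pairwise coprime $a_i$ this $B$ is a polynomial of degree $s-1$ in the argument $N$, and inspecting its construction shows that its coefficients are symmetric rational functions of $a_1, \dots, a_s$, each of the shape $\frac{\text{symmetric polynomial}}{a_1\cdots a_s}$ (with leading coefficient $\frac{1}{(s-1)!\,a_1\cdots a_s}$). Substituting the polynomials $a_j(n)$ and $m(n)$ therefore turns $B$ into a rational function of $n$, hence into a quasi-rational function in the sense of Definition~\ref{quasi-rational}. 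I would record this much and not try to say more about $B$ in isolation.

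The main obstacle is precisely the assertion that $B$ by itself is a quasi-polynomial, and I do not expect it to hold literally: the leading term already produces $\frac{m(n)}{a_1(n)\cdots a_s(n)}$, which need not be integer-valued and whose restriction to a residue class is a rational, not polynomial, function of the block variable. What is genuinely true is that $B$ is \emph{quasi-rational}. The integrality that upgrades a quasi-rational function to an integer-valued quasi-polynomial (Proposition~\ref{rational}) is available only for the total count $p_{A(n)}(m(n))$, never for $B$ alone; and the mechanism that actually absorbs the sums $s_{-m(n)}(\cdots; a_i(n))$ --- whose moduli $a_i(n)$ grow with $n$ --- into a bona fide quasi-polynomial is the recursion of Lemma~\ref{4.1.0}, to be iterated in the proof of Theorem~\ref{1.1}. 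Accordingly, in my version I would state the $B$-clause as ``$B$ is quasi-rational,'' reserving the quasi-polynomial conclusion for the combined expression rather than for its summands.
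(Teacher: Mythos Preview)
Your approach matches the paper's: the authors simply write ``From Lemma~\ref{p_A} and Lemma~\ref{4.1.0}, we have the following lemmas'' and state Lemma~\ref{p_A(n)1} without further proof, so the intended argument is exactly the pointwise substitution $a_i\mapsto a_i(n)$, $n\mapsto m(n)$ into Lemma~\ref{p_A} that you carry out.

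Your critique of the $B$-clause is also well taken and goes beyond what the paper says. The coefficients of $B$ as a polynomial of degree $s-1$ in its argument are rational functions of $a_1,\dots,a_s$ (the leading term being $\frac{1}{(s-1)!\,a_1\cdots a_s}$), so after substituting nonconstant $a_i(n)$ one obtains a genuine rational function of $n$ on each residue class, i.e.\ a quasi-rational function in the sense of Definition~\ref{quasi-rational}, not a quasi-polynomial. The paper's wording ``$B$ is a quasi-polynomial in $n$'' thus appears to be an imprecision; your proposed emendation to ``$B$ is quasi-rational'' is the correct formulation, and it is exactly what is needed downstream, since the proof of Theorem~\ref{1.1} only ever invokes Proposition~\ref{rational} on the full sum $p_{A(n)}(m(n))$, not on $B$ alone.
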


\begin{lem}\label{4.1.01} Suppose that $a_1(n),a_2(n),\cdots,a_s(n)$ are pairwise coprime integer-valued
polynomials with positive leading coefficients. Then
\begin{eqnarray*}
   && s_{-m(n)}(a_1(n),a_2(n),\cdots, a_{s-1}(n);a_s(n)) \\
  &=&
B^{'}-B-\sum_{i=1}^{s-1}s_{-\bigg(m(n)-a_s(n)\big(\big\{\frac{[\frac{m(n)}{a_s(n)}]}{a_i(n)}\big\}a_i(n)+1\big)\bigg)}(a_1(n),\cdots,
a_{i-1}(n),a_{i+1}(n),\cdots ,a_s(n);a_i(n))
\end{eqnarray*}
where $B$ is the polynomial part of
$p_{(a_1(n),\cdots,a_s(n))}(m(n))$,
$\sum_{i=1}^{s-1}(-1)^{i}B_i^{'}(t)$ is the polynomial part of
$p_{(a_1(n),\cdots,a_{s-1}(n))}(m(n)-a_s(n)t)$ and
$$B^{'}=\sum_{t=0}^{\big[\frac{n}{a_s}\big]}\sum_{i=1}^{s-1}(-1)^{i}B_i^{'}(t)$$Besides $B^{'}-B$ is a quasi-polynomial in
$n$.
\end{lem}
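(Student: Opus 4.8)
The plan is to obtain Lemma \ref{4.1.01} as the parameter-wise specialization of Lemma \ref{4.1.0}. Since each $a_i(n)$ has positive leading coefficient, there is an $N_0$ such that for all $n>N_0$ the integers $a_1(n),\dots,a_s(n)$ are positive; by hypothesis they are pairwise coprime (in the sense of Definition \ref{divisor}, so that $\gcd(a_i(n),a_j(n))=1$ for every $n$), and $m(n)\in\mathbb{Z}$. Fixing such an $n$ and applying Lemma \ref{4.1.0} with its running variable set to the integer $m(n)$ and its parameters $a_1,\dots,a_s$ set to $a_1(n),\dots,a_s(n)$ produces precisely the displayed identity at that $n$. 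As this holds for every $n>N_0$, it is an identity of functions of $n$, which is the first assertion.

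It remains to prove that $B'-B$ is a quasi-polynomial in $n$, and here I would retrace the computation inside the proof of Lemma \ref{4.1.0}, keeping track of the new dependence on $n$ through the $a_i(n)$. For fixed $n$ the inner sum $\sum_{i=1}^{s-1}(-1)^iB_i'(t)$ is, by Lemma \ref{p_A}, the polynomial part of $p_{(a_1(n),\dots,a_{s-1}(n))}(m(n)-a_s(n)t)$, hence a polynomial of degree $s-2$ in the single quantity $m(n)-a_s(n)t$ whose coefficients are the symmetric rational expressions in $a_1(n),\dots,a_{s-1}(n)$ furnished by that formula. Expanding in powers of $t$ rewrites it as a polynomial in $t$ of degree $s-2$ whose coefficients are quasi-rational functions of $n$ (Definition \ref{quasi-rational}), assembled from $m(n)$, $a_s(n)$ and those symmetric expressions.

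The summation over $t$ is then carried out using that each power sum $\sum_{t=0}^{T}t^{j}$ is a polynomial in $T$. The one genuinely new ingredient, absent when the $a_i$ were constants, is the treatment of the summation data: by the generalized Euclidean division of Theorem \ref{2. 0. 3} and Remark \ref{2. 0. 01}, the upper limit $[m(n)/a_s(n)]=\mathrm{quo}(m(n),a_s(n))$ is an integer-valued quasi-polynomial in $n$, and the shift $\{[m(n)/a_s(n)]/a_i(n)\}\,a_i(n)=\mathrm{rem}(\mathrm{quo}(m(n),a_s(n)),a_i(n))$ is likewise one, so the arguments of the Fourier-Dedekind sums on the right-hand side lie in the ring $R$ and those sums are well-defined quasi-rational functions of $n$. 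Substituting the quasi-polynomial $[m(n)/a_s(n)]$ for $T$ in the power-sum polynomials and using that $R$ and the quasi-rational functions are closed under the ring operations (Proposition \ref{R}) shows $B'=\sum_{t=0}^{[m(n)/a_s(n)]}\sum_{i=1}^{s-1}(-1)^iB_i'(t)$ to be a quasi-rational function of $n$; since $B$, the polynomial part of $p_{(a_1(n),\dots,a_s(n))}(m(n))$, is quasi-rational for the same reason, so is $B'-B$.

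The step I expect to be the main obstacle is precisely this last regularity claim. When the $a_i$ were fixed integers the polynomial-part coefficients were numerical constants, so $B$ was an honest polynomial and $B'$ an honest quasi-polynomial; here each such coefficient carries denominators $a_1(n)\cdots a_s(n)$, so $B$ and $B'$ are only quasi-rational in $n$ a priori, and the delicate point is to verify that their difference attains the asserted quasi-polynomial regularity rather than merely quasi-rationality. I would attack this by exhibiting the cancellation of the top-degree part of $B'$ against $B$ — both leading terms equal $m(n)^{s-1}/((s-1)!\,a_1(n)\cdots a_s(n))$ — and then tracking the surviving lower-order terms to show the denominators clear, appealing to Proposition \ref{rational} at any stage where a quasi-rational expression is seen to be integer-valued. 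This denominator matching, which genuinely uses the structure of the polynomial part from Lemma \ref{p_A} and not merely formal manipulation, is the crux of the argument.
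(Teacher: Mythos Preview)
Your derivation of the identity by pointwise specialization of Lemma~\ref{4.1.0} is exactly what the paper does: its entire proof of Lemma~\ref{4.1.01} is the sentence ``From Lemma~\ref{p_A} and Lemma~\ref{4.1.0}, we have the following lemmas.''

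You are right to flag the clause ``$B'-B$ is a quasi-polynomial in $n$'' as the delicate point, but you are working harder than the paper does. The paper supplies no argument for it beyond that one sentence, and in fact its proof of Theorem~\ref{1.1} uses only the quasi-\emph{rationality} of the pieces, invoking Proposition~\ref{rational} only once, at the very end, on the integer-valued quantity $p_{A(n)}(m(n))$. The word ``quasi-polynomial'' here appears to be carried over verbatim from Lemma~\ref{4.1.0}, where the $a_i$ are constants and $B$, $B'$ are honest (rational-coefficient) quasi-polynomials; once the $a_i(n)$ vary with $n$, your own analysis shows that $B$ and $B'$ are only quasi-rational, and there is no evident mechanism forcing the denominators $a_1(n)\cdots a_s(n)$ to clear in the difference. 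Your proposed leading-term cancellation is correct but does not by itself handle the lower-order terms, and the paper does not attempt any such cancellation. In short: the quasi-rationality you already establish is what matches the paper's actual argument and what Theorem~\ref{1.1} requires; the stronger literal claim is neither proved in the paper nor needed downstream.
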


Let all entries of vector $A(n)$ be integer-valued quasi-polynomials
in $n$. Then we say  $A(n)$ is constant if the degree (see
Definition $\ref{2.0.1}$) of each nonzero entrie in $A(n)$ is zero.
Besides, we say  a vector is pairwise coprime if  entries of that
 vector are pairwise coprime.

For $A(n)=(a_1(n),a_2(n),\cdots,a_s(n))$, define
$$A^{(0)}(n)=\{a_1(n),a_2(n),\cdots,a_s(n)\}$$ If $A^{(k)}(n)$ is
not empty, define
$$a_{i_1i_2\cdots i_ki_{k+1}i}(n)=\left\{
                                \begin{array}{ll}
                                  \bigg\{\frac{a_{i_1i_2\cdots i_ki}(n)}{a_{i_1i_2\cdots i_ki_{k+1}}(n)}\bigg\}a_{i_1i_2\cdots i_ki_{k+1}}(n),
& \hbox{$i\neq i_{k+1}$;} \\
                                  a_{i_1i_2\cdots i_ki_{k+1}}(n), & \hbox{$i=i_{k+1}$.}
                                \end{array}
                              \right.
$$
$$A_{i_1i_2\cdots i_ki_{k+1}}(n)=\big(a_{i_1i_2\cdots i_ki_{k+1}1}(n),\cdots,a_{i_1i_2\cdots i_ki_{k+1}s}(n)\big)$$
and $$A^{(k+1)}(n)=\bigg\{A_{i_1i_2\cdots i_ki_{k+1}}(n)\mid
i_1,i_2,\cdots,i_{k+1}\in \{1,2,\cdots,s\},A_{i_1i_2\cdots
i_ki_{k+1}}(n) \,\,is\,\, not\,\, constant\bigg\}$$ If $A^{(k)}(n)$
is empty, then let $A^{(l)}(n)$ be empty for any $l>k$. By Lemma
$\ref{2. 0. 008}$, we have
\begin{lem}\label{4.4}There exists a unique positive
integer $h$ such that  $A^{(l)}(n)$ is empty for each $l\geqslant h$
while $A^{(k)}(n)$ is not empty  for each $0\leqslant k<h$.
\end{lem}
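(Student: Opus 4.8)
The plan is to split the statement into a \emph{termination} claim---that along every admissible sequence of pivot indices the vectors $A_{i_1\cdots i_k}(n)$ become constant after boundedly many steps---and a purely order-theoretic bookkeeping step that extracts $h$. Once termination is in hand, existence and uniqueness of $h$ are immediate. By the very definition of the sets $A^{(k)}(n)$, emptiness propagates upward: level $k+1$ is built only from the elements of level $k$, so if $A^{(k)}(n)=\emptyset$ then $A^{(l)}(n)=\emptyset$ for all $l>k$, and the set $\{k:A^{(k)}(n)=\emptyset\}$ is upward closed. Termination says it is nonempty, while $A^{(0)}(n)=\{a_1(n),\dots,a_s(n)\}$ is nonempty because $s\geq 1$. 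Hence $h:=\min\{k:A^{(k)}(n)=\emptyset\}$ is a well-defined positive integer, it is unique, $A^{(l)}(n)=\emptyset$ for $l\geq h$, and $A^{(k)}(n)\neq\emptyset$ for $0\leq k<h$.

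For termination I would introduce a monovariant on vectors over the ring $R$. For $B=(b_1,\dots,b_s)\in R^{s}$ let $\mathcal D(B)$ be the multiset $\{\deg b_j:b_j\neq 0\}$ (degree as in Definition \ref{2.0.1}, with $\deg 0=-\infty$), ordered by comparing the nonincreasing rearrangements lexicographically; since the entries lie in a set of integers bounded below, this is a well-order, so there is no infinite strictly decreasing chain. The essential input is the remainder condition $0\preccurlyeq r\prec|g|$ of the generalized Euclidean division (Theorem \ref{2. 0. 3}), which forces $\deg\bigl(\operatorname{rem}(f,g)\bigr)<\deg g$. Passing from $A_{i_1\cdots i_k}(n)$ to $A_{i_1\cdots i_ki_{k+1}}(n)$ leaves the pivot coordinate $a_{i_1\cdots i_ki_{k+1}}(n)$ unchanged and replaces every other coordinate by its remainder modulo the pivot; thus each non-pivot coordinate acquires degree strictly below that of the pivot, and coordinates already of smaller degree are unchanged. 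Consequently $\mathcal D$ is nonincreasing along any branch, and it strictly decreases at a step precisely when some non-pivot coordinate had degree at least that of the pivot.

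The main obstacle is to rule out stalling, i.e.\ steps at which $\mathcal D$ does not drop; such a step occurs only when the pivot is the \emph{unique} coordinate of maximal degree. I would resolve this by observing that stalling is self-correcting. Immediately after pivoting on coordinate $p$ with $\deg a_{\cdots p}\geq 1$, every other coordinate has degree strictly below $\deg a_{\cdots p}$, so $p$ is the unique maximal-degree coordinate of the new vector; the next pivot, being a different coordinate, therefore reduces that unique maximal coordinate and strictly decreases $\mathcal D$. Hence, after the initial transient, every branch produces a strictly decreasing chain in the well-ordered set of degree multisets and must terminate at a vector all of whose nonzero entries have degree $0$, i.e.\ a constant vector, in a number of steps bounded independently of the branch. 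This is exactly the vector-valued form of Lemma \ref{2. 0. 008}, whose one-dimensional content---that the Euclidean algorithm on a pair reaches a constant remainder---drives the coordinatewise degree drop, while the multiset monovariant lifts it to the $s$-coordinate tree. Feeding the uniform bound on branch length into the bookkeeping of the first paragraph completes the proof.
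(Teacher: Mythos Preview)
Your argument has a genuine gap at the step where you assert that the remainder condition $0\preccurlyeq r\prec|g|$ ``forces $\deg(\operatorname{rem}(f,g))<\deg g$.'' For the generalized Euclidean division of Theorem~\ref{2. 0. 3} this is simply false: $r\prec|g|$ only says that $|g|-r$ has positive leading coefficient on every piece, which yields $\deg r\leqslant\deg g$, not strict inequality. Example~\ref{eg1} in the paper already illustrates this: $\operatorname{rem}(x^{2},\,2x+1)$ is the period-$2$ quasi-polynomial with pieces $3m+1$ and $m$, both of degree $1=\deg(2x+1)$. As a consequence, after pivoting on coordinate $p$ the non-pivot coordinates may well keep degree equal to $\deg a_{\cdots p}$, so your claim that ``$p$ is the unique maximal-degree coordinate of the new vector'' fails, and with it the ``stalling is self-correcting'' step. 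The degree-multiset monovariant $\mathcal D$ therefore need not decrease along a branch even after two consecutive steps, and the termination argument does not go through as written. (A smaller side issue: you also assume the next pivot is ``a different coordinate,'' but the definition of $A^{(k+1)}(n)$ places no restriction $i_{k+1}\neq i_k$.)

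The paper's proof, by contrast, is a bare appeal to Lemma~\ref{2. 0. 008}, which packages exactly the subtle termination statement needed in $R$: successive remainders eventually become constant even though the degree may stay flat across individual steps. The underlying reason is that while $\deg$ is only nonincreasing under $r\prec|g|$, it cannot stay fixed forever, since on each piece the leading coefficient is then a strictly decreasing positive integer (up to the period refinements built into the division). Your approach can be repaired by replacing the bare degree with a finer monovariant (for instance, tracking the $\prec$-maximum entry itself and invoking Lemma~\ref{2. 0. 008} to rule out an infinite strictly $\prec$-decreasing chain), but the clean degree-drop shortcut you took is not available in $R$.
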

\begin{defn}[Strongly coprime]\label{strong}Let $h$ be the integer
satisfying conditions in Lemma $\ref{4.4}$. We say
$A(n)=(a_1(n),a_2(n),\cdots,a_s(n))$ is strongly coprime if elements
of $A^{(0)}(n),A^{(1)}(n),\\ \cdots, A^{(h-1)}(n)$ are all pairwise
coprime.
\end{defn}

\begin{proof}[\textbf{Proof of Theorem $\ref{1.1}$}]First note that when $a_{i_0}$ is a constant,
we have
\begin{eqnarray*}
  &&  s_{-m(n)}(a_1(n),\cdots, a_{i_0-1}(n),a_{i_0+1}(n),\cdots, a_s(n);a_{i_0})\\
 &=& \frac{1}{a_{i_0}}
\sum_{k=1}^{a_{i_0}-1}\frac{\xiup_{a_{i_0}}^{-km(n)}}{(1-\xiup_{a_{i_0}}^{ka_1(n)})\cdots
(1-\xiup_{a_{i_0}}^{ka_{i_0-1}(n)})(1-\xiup_{a_{i_0}}^{ka_{i_0+1}(n)})\cdots(1-\xiup_{a_{i_0}}^{ka_s(n)})}\in
\mathbb{Q} \end{eqnarray*}  for any $n\in \mathbb{N}$ and
$s_{-m(n)}(a_1(n),\cdots, a_{i_0-1}(n),a_{i_0+1}(n),\cdots,
a_s(n);a_{i_0})$ is quasi-rational (see Definition
$\ref{quasi-rational}$) as a function in $n$.

Since $A(n)$ is  strongly coprime, by Lemma $\ref{4.4}$ and
Definition $\ref{strong}$, we can apply Lemma $\ref{4.1.01}$
successively to the formula for $p_{A(n)}(m(n))$ in Lemma
$\ref{p_A(n)1}$ until $p_{A(n)}(m(n))$ can be expressed as a sum of
finitely many quasi-rational functions. Then by Proposition
$\ref{rational}$, $p_{A(n)}(m(n))$ is an integer-valued
quasi-polynomial for $n$ sufficiently large.
\end{proof}
\begin{eg}Suppose that $a_1,a_2,\cdots, a_{s-1}$ are positive integers,
$a_s(n)$ is an integer-valued polynomial in $n$ with positive
leading coefficients and $a_1,\cdots, a_{s-1},a_s(n)$ are pairwise
coprime. Then by  Definition $\ref{strong}$, $A(n)=(a_1,\cdots,
a_{s-1},a_s(n))$ is strongly coprime. Thus by Theorem $\ref{1.1}$,
$p_{A(n)}(m(n))$ is an integer-valued quasi-polynomial in $n$ for
$n$ sufficiently large.
\end{eg}

\section{ Proof of Theorem $\ref{1.2}$ and Theorem $\ref{1.3}$ }
In the previous section we have studied the conjecture in case of
one equation. Now we turn to the case of equations and prove Theorem
$\ref{1.2}$ and Theorem $\ref{1.3}$.
\subsection{ Proof of Theorem $\ref{1.2}$ }
Suppose that $A(n)=A=(a_1, a_2,\cdots, a_s)\in M_{k\times
s}(\mathbb{Z})$  satisfies the following  conditions: (1) $A$ is
unimodular, i.e., the polyhedron $\{x:Ax=b,x\geqslant 0\}$ has only
integral vertices whenever $b$ is in the lattice spanned by the
columns of $A$; (2) $Ker(A)\cap \mathbb{R}_{\geqslant 0}^{s}=0$.

Define $pos(A)=\{\sum_{i=1}^{s}\lambdaup_ia_i\in
\mathbb{R}^{k}:\lambdaup_1,\cdots,\lambdaup_s\geqslant 0\}$.  For
$\sigma \subset [s]:=\{1,\cdots,s\}$, we consider the submatrix
$A_{\sigma}:=(a_i:i\in\sigma)$, the polyhedral cone
$pos(A_{\sigma})$, and the abelian group $\mathbb{Z}A_{\sigma}$
spanned by the columns of $A_{\sigma}$. Since $A$ is unimodular, $A$
is surjective, that is, $\mathbb{Z}A=\mathbb{Z}^{k}$. This implies
that the semigroup $\mathbb{N}A:=pos(A)\cap \mathbb{Z}A$ is
saturated. A subset $\sigma$ of $[s]$ is a basis if
$\#(\sigma)=rank(A_{\sigma})=k$. The chamber complex is the
polyhedral subdivision of the cone $pos(A)$ which is defined as the
common refinement of the simplicial cones $pos(A_{\sigma})$, where
$\sigma$ runs over all bases. Each chamber $C$, i.e.,  maximal cell
in the chamber complex, is indexed by the set $\bigtriangleup
(C)=\{\sigma\subset [s]:C\subseteq pos(A_{\sigma})\}$.

\begin{lem}[See \cite{stum},Theorem 1.1 or \cite{number}, Corollary 3.1]\label{uni}
Under the conditions of $A$ in Theorem \ref{1.2}, the vector
partition function $$\phi_{A}(u)=\#\{x:Ax=b,x\geqslant 0,x\,\,is
\,\,integral\}$$ is a polynomial function of degree $s-k$ in
$u=(u_1,\cdots,u_k)$ on each chamber.
\end{lem}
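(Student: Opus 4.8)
The plan is to realize $\phi_{A}(u)$ as the number of lattice points of a moving fiber polytope and to extract unimodularity through a Brion-type decomposition of that polytope. Fix a chamber $C$ and let $u$ lie in its relative interior. Condition (2), $Ker(A)\cap \rr_{\geqslant 0}^{s}=0$, guarantees that the fiber $P_u=\{x\in\rr^{s}:Ax=u,\ x\geqslant 0\}$ is a bounded polytope, and since $A$ has rank $k$ (it is surjective, as $\zz A=\zz^{k}$) its dimension is $s-k$. The first step is to check that as $u$ ranges over $C$ the normal fan of $P_u$ stays constant, so $P_u$ has a fixed combinatorial type: its vertices are indexed precisely by the bases $\sigma\in\bigtriangleup(C)$, where $x_{\sigma}$ solves $A_{\sigma}(x_{\sigma})_{\sigma}=u$ with $(x_{\sigma})_{\overline{\sigma}}=0$. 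In particular each vertex $x_{\sigma}=x_{\sigma}(u)$ depends linearly on $u$. This constancy is exactly what the chamber complex, defined as the common refinement of the $pos(A_{\sigma})$, is built to provide.

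The second step is to count $\#(P_u\cap\zz^{s})$ by Brion's theorem. Let $K_{\sigma}$ denote the supporting (tangent) cone of $P_u$ at the vertex $x_{\sigma}$. Brion's identity gives, as an equality of rational functions,
$$\sum_{m\in P_u\cap\zz^{s}}z^{m}\;=\;\sum_{\sigma\in\bigtriangleup(C)}\ \sum_{m\in K_{\sigma}\cap\zz^{s}}z^{m},$$
and a suitable regularized specialization $z\to 1$ recovers $\phi_{A}(u)=\#(P_u\cap\zz^{s})$ as a sum of the $\sigma$-contributions. Here unimodularity enters decisively: because $A$ is unimodular, every vertex $x_{\sigma}(u)$ is an integral point, and each tangent cone $K_{\sigma}$ is unimodular with respect to the kernel lattice $Ker(A)\cap\zz^{s}\cong\zz^{s-k}$. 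Hence the integer-point transform of each $K_{\sigma}$ is a single rational term with no contribution from nontrivial roots of unity, and its value at $z=1$ is a genuine polynomial in the vertex coordinates $x_{\sigma}(u)$, thus a polynomial in $u$. Summing over $\sigma\in\bigtriangleup(C)$ shows that $\phi_{A}$ coincides with one fixed polynomial throughout $C$, with no periodic part surviving.

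Finally, the third step pins down the degree. The top-degree part of $\phi_{A}(u)$ on $C$ is the relative volume of $P_u$; since $P_{tu}=tP_u$ for $t>0$ and $\dim P_u=s-k$, this volume is homogeneous of degree $s-k$ in $u$ and is strictly positive on the interior of $C$, while the lower-order terms have smaller degree. Therefore $\phi_{A}|_{C}$ is a polynomial of degree exactly $s-k$.

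The main obstacle is the second step: one must verify carefully that unimodularity of $A$ forces every vertex tangent cone $K_{\sigma}$ to be unimodular for the kernel lattice (equivalently, that each basis submatrix $A_{\sigma}$ identifies the relevant sublattices with unit index), for this is precisely the mechanism that cancels the fractional/periodic corrections and upgrades the piecewise \emph{quasi}-polynomial of Sturmfels' general theorem to a piecewise \emph{polynomial}. A secondary technical point is establishing the constancy of the combinatorial type of $P_u$ over a single chamber, which is what allows $\bigtriangleup(C)$ to be treated as a fixed vertex-index set and the vertex-cone contributions to be glued into one polynomial per chamber.
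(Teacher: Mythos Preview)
The paper does not actually prove this lemma: it is quoted directly from the literature (De Loera--Sturmfels and Dahmen--Micchelli) and used as a black box in the short proof of Theorem~\ref{1.2}. Your proposal therefore cannot be compared against a proof in the paper, but it is worth noting that your Brion-type argument is sound and is in fact close in spirit to the approach in \cite{stum}: the chamber complex fixes the combinatorial type of $P_u$, unimodularity of $A$ forces $|\det A_\sigma|=1$ for every basis $\sigma$, hence each vertex $x_\sigma(u)=A_\sigma^{-1}u$ is integral and each tangent cone is lattice-unimodular in $\ker(A)\cap\zz^{s}$, so the Brion contributions are genuine polynomials in $u$ with no periodic part. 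The Dahmen--Micchelli reference \cite{number} reaches the same conclusion by a different route, via box splines and difference operators rather than vertex cones. Either way, the statement is standard, and the paper's choice to cite rather than prove it is appropriate; your sketch would serve as a correct self-contained substitute, with the one caveat you already flag, namely that the unimodularity of the tangent cones in the kernel lattice deserves a line of justification (it follows from $|\det A_\sigma|=1$ and the Smith normal form, or equivalently from the fact that the projection $\zz^{s}\to\zz^{s}/\zz^{\sigma}$ identifies the edge directions at $x_\sigma$ with a lattice basis).
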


\begin{proof}[\textbf{Proof of Theorem $\ref{1.2}$}] Each  chamber for $A$,
as a convex polyhedral cone, can be described  by linear
inequalities in $u=(u_1,\cdots,u_k)$. Since these linear
inequalities are independent of $n$, there exists a chamber  $C$
such that
 $ m(n)$ lies in $C$ for  $n$ sufficiently large. Thus the result follows from
Lemma $\ref{uni}$.
\end{proof}
\begin{rem}
By Theorem 1 in \cite{vector}, using the idea in the proof of
Theorem $\ref{1.2}$, we can verify the conjecture in  case of
$A(n)=A\in M_{k\times s}(\mathbb{Z})$ without  unimodular condition.

\end{rem}
 Now we give an example to illustrate Theorem
$\ref{1.2}$.

 \begin{eg}
 Let
 $$A=\left(
       \begin{array}{ccccc}
         1 & 0 & 0 & 1 & 1 \\
         0 & 1 & 0 & 1 & 0 \\
         0 & 0 & 1 & 0 & 1 \\
       \end{array}
     \right)
 $$
 Then $A$ is a   unimodular matrix.
 By  $\cite{stum}$, we have
$$\phi_A(a,b,c)=\left\{
                                \begin{array}{ll}
                                  bc+b+c+1, & \hbox{if $\,\,(a,b,c)\in \Omega_1$} \\
                                  \frac{1}{2}a^{2}+\frac{3}{2}a+1, & \hbox{if\,\, $(a,b,c)\in \Omega_2$} \\
                                  ab-\frac{1}{2}b^{2}+\frac{1}{2}b+a+1, & \hbox{if\,\, $(a,b,c)\in \Omega_3$} \\
                                  ac-\frac{1}{2}c^{2}+\frac{1}{2}c+a+1, & \hbox{if \,\,$(a,b,c)\in \Omega_4$} \\
                                  ab+ac-\frac{1}{2}(a^{2}+b^{2}+c^{2})+\frac{1}{2}(a+b+c)+1, & \hbox{if \,\,$(a,b,c)\in \Omega_5$}
                                \end{array}
                              \right.
 $$
where
\begin{eqnarray*}
  \Omega_1 &=& \{(a,b,c)|a\geqslant b+c \,\, and \,\, b,c\geqslant0\} \\
  \Omega_2 &=& \{(a,b,c)|min\{b,c\}\geqslant a\geqslant 0\} \\
 \Omega_3 &=& \{(a,b,c)|c\geqslant a\geqslant b\geqslant 0\} \\
 \Omega_4 &=& \{(a,b,c)|b\geqslant a\geqslant c\} \\
\Omega_5 &=& \{(a,b,c)|b+c\geqslant a\geqslant max\{b,c\}\} \\
\end{eqnarray*}
If we take $A(n)=A$  and $$m(n)=\left(
                           \begin{array}{c}
                             2n^{2}-n \\
                             2n^{2}+5\\
                             n^{2}+10n \\
                           \end{array}
                         \right)$$ then $m(n)$ lies in $\Omega_4$ for $n$ sufficiently
                         large. By the previous formula for
                         $\phi_A(a,b,c)$, we have, for $n>11$
                         $$t(m(n)|A(n))=\frac{3}{2}n^{4}+9n^{3}-\frac{115}{2}n^{2}+4n+1$$
 \end{eg}
\subsection{Proof of Theorem $\ref{1.3}$}

\begin{defn}[Similar to \cite{Xu}, p.79]\label{1prime}We call  a polynomial matrix $A(n)_{k\times (k+1)}$ 1-prime  if
$ggcd\{|det(Y)|:Y\in \mathcal {B}\}=1$ (see Remark $\ref{2. 0. 014}$
for the notation of $ggcd$), where $\mathcal {B}$ denotes the set of
all the $k\times k$ submatrix of $A(n)$.

Let $$A= \left(
  \begin{array}{cccc}
    x_1 & x_2  & x_3 \\
    y_1 & y_2  & y_3 \\
  \end{array}
\right)\in M_{2\times 3}(\mathbb{Z})
$$For $i=1,2$, suppose that $\frac{y_i}{x_i}<\frac{y_{i+1}}{x_{i+1}}$, and define
$$\Omega_i=\bigg\{(x,y)^{T}|(x,y)^{T}\in pos(A),\frac{y_i}{x_i}<\frac{y}{x}<\frac{y_{i+1}}{x_{i+1}}\bigg\}$$
\end{defn}

\begin{lem}[See Theorem 7 in \cite{Xu2}]\label{5.4}Let
$$A=\left(
      \begin{array}{ccc}
        x_1 & x_2 & x_3 \\
        y_1 & y_2 & y_3 \\
      \end{array}
    \right)\in M_{2\times 3}(\mathbb{Z})
$$be a 1-prime matrix. Let $$M_{ij}=\left(
                   \begin{array}{cc}
                     x_i & x_j \\
                     y_i & y_j \\
                   \end{array}
                 \right)
$$ and let $Y_{ij}=det(M_{ij})$. Then we have the following formula.

When $m=(m_1,m_2)^{T}\in \overline{\Omega}_1\cap \mathbb{Z}^{2}$,
\begin{eqnarray*}
   t(m|A)&=& \frac{m_2x_1-m_1y_1}{Y_{12}Y_{13}}-
\{\frac{(f_{12}Y_{13}+g_{12}Y_{23})^{-1}(m_2(f_{12}x_1+g_{12}x_2)-m_1(f_{12}y_1+g_{12}y_2))}{Y_{12}}\}
 \\
  &&-\{\frac{(f_{13}Y_{12}+g_{13}Y_{23})^{-1}(m_2(f_{13}x_1+g_{13}x_3)-m_1(f_{13}y_1+g_{13}y_3))}{Y_{13}}\}+1
\end{eqnarray*}
When $m=(m_1,m_2)^{T}\in \overline{\Omega}_2\cap \mathbb{Z}^{2}$,
\begin{eqnarray*}
   t(m|A)&=& \frac{m_1y_3-m_2y_3}{Y_{23}Y_{13}}-
\{\frac{(f_{23}Y_{13}+g_{23}Y_{12})^{-1}(m_1(f_{23}x_3+g_{23}x_2)-m_2(f_{23}y_3+g_{23}y_2))}{Y_{23}}\}
 \\
  &&-\{\frac{(f_{13}Y_{12}+g_{13}Y_{23})^{-1}(m_1(f_{13}x_1+g_{13}x_3)-m_2(f_{13}y_1+g_{13}y_3))}{Y_{13}}\}+1
\end{eqnarray*}
where, $f_{12},g_{12},f_{13},g_{13},f_{23}$ and $g_{23}\in
\mathbb{Z}$ satisfy $gcd(f_{12}Y_{13}+g_{12}Y_{23},Y_{12})=1$,
$gcd(f_{13}Y_{12}+g_{13}Y_{23},Y_{13})=1$ and
$gcd(f_{23}Y_{13}+g_{23}Y_{12},Y_{23})=1$, moreover,
$(f_{12}Y_{13}+g_{12}Y_{23})^{-1}(f_{12}Y_{13}+g_{12}Y_{23})\equiv
1$ $mod$ $Y_{12}$,
$(f_{13}Y_{12}+g_{13}Y_{23})^{-1}(f_{13}Y_{12}+g_{13}Y_{23})\equiv
1$ $mod$ $Y_{13}$,
$(f_{23}Y_{13}+g_{23}Y_{12})^{-1}(f_{23}Y_{13}\\+g_{23}Y_{12})\equiv
1$ $mod$ $Y_{23}$.
\end{lem}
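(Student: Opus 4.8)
The plan is to evaluate $t(m|A)$ directly as a count of lattice points on a line segment, reading off the two fractional-part corrections from its two endpoints. Write $a_i=(x_i,y_i)^{T}$ for the columns of $A$. Since $A$ is $1$-prime, Definition~\ref{1prime} gives $\gcd(|Y_{12}|,|Y_{13}|,|Y_{23}|)=1$, so by the Smith normal form the map $A\colon\mathbb{Z}^{3}\to\mathbb{Z}^{2}$ is surjective. Hence $Ax=m$ admits a particular integral solution $x_0=(n_1,n_2,n_3)$, and every integral solution has the form $x_0+kv$ with $k\in\mathbb{Z}$, where $v=(Y_{23},-Y_{13},Y_{12})$ generates $\ker A\cap\mathbb{Z}^{3}$: one checks $Av=0$ from the Pl\"ucker relations $x_1Y_{23}-x_2Y_{13}+x_3Y_{12}=0$ and $y_1Y_{23}-y_2Y_{13}+y_3Y_{12}=0$, and $v$ is primitive exactly because its entries have gcd $1$. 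Consequently $t(m|A)=\#\{k\in\mathbb{Z}:x_0+kv\geqslant 0\}$.

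For $m\in\overline{\Omega}_1$ I would next determine which of the three inequalities $x_0+kv\geqslant0$ are active. After the slope ordering one has $Y_{12},Y_{13},Y_{23}>0$, so coordinates $2$ and $3$ give $k\leqslant k_+:=n_2/Y_{13}$ and $k\geqslant k_-:=-n_3/Y_{12}$. The first coordinate $n_1+kY_{23}$ is increasing in $k$ and is nonnegative at $k=k_-$ (there $x_0+k_-v$ is the representation of $m\in\mathrm{pos}(a_1,a_2)$ using $a_1,a_2$ alone), so it is automatically satisfied on $[k_-,k_+]$. Thus $t(m|A)=\lfloor k_+\rfloor-\lceil k_-\rceil+1$, and converting to fractional parts yields
\[
t(m|A)=\frac{n_2}{Y_{13}}+\frac{n_3}{Y_{12}}-\Big\{\frac{n_2}{Y_{13}}\Big\}-\Big\{\frac{n_3}{Y_{12}}\Big\}+1 .
\]
The identity $\det(a_1,m)=n_2Y_{12}+n_3Y_{13}=m_2x_1-m_1y_1$ collapses the first two summands into the leading term $\dfrac{m_2x_1-m_1y_1}{Y_{12}Y_{13}}$.

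It then remains to express the residues $n_3\bmod Y_{12}$ and $n_2\bmod Y_{13}$ in the two fractional parts. For any integers $f_{12},g_{12}$ the functional $\psi(w)=\det(f_{12}a_1+g_{12}a_2,\,w)$ satisfies $\psi(a_1)\equiv\psi(a_2)\equiv 0\pmod{Y_{12}}$ and $\psi(a_3)=f_{12}Y_{13}+g_{12}Y_{23}$. Applying $\psi$ to $m=n_1a_1+n_2a_2+n_3a_3$ gives $n_3(f_{12}Y_{13}+g_{12}Y_{23})\equiv m_2(f_{12}x_1+g_{12}x_2)-m_1(f_{12}y_1+g_{12}y_2)\pmod{Y_{12}}$. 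The $1$-prime hypothesis is precisely what permits a choice of $f_{12},g_{12}$ with $\gcd(f_{12}Y_{13}+g_{12}Y_{23},Y_{12})=1$: the quantities $f_{12}Y_{13}+g_{12}Y_{23}$ sweep out $\gcd(Y_{13},Y_{23})\mathbb{Z}$, and $\gcd(\gcd(Y_{13},Y_{23}),Y_{12})=\gcd(Y_{12},Y_{13},Y_{23})=1$. Inverting recovers the stated argument for the first fractional part, and the symmetric functional $\det(f_{13}a_1+g_{13}a_3,\,w)$ produces $n_2\bmod Y_{13}$ and the second one.

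The crux, and the only genuinely delicate point, is this use of $1$-primeness in the previous paragraph to guarantee the coprime combinations exist, together with matching the modular inverses and the sign conventions to the exact expressions in the statement; the remaining manipulations are routine. One must also treat the boundary of $\overline{\Omega}_1$, where an endpoint $k_\pm$ is itself an integer, checking that the convention $\{n\}=0$ for $n\in\mathbb{Z}$ reproduces the correct inclusive count. Finally, the formula on $\overline{\Omega}_2$ follows by the identical argument after interchanging columns $1$ and $3$: there the first coordinate's inequality is the ``outer'' binding one, the two endpoint subsystems are $\{2,3\}$ and $\{1,3\}$, the denominators become $Y_{23}$ and $Y_{13}$, and the leading term is $\det(m,a_3)/(Y_{13}Y_{23})$.
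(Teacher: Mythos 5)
The first thing to say is that the paper does not prove this lemma at all: it is imported verbatim, with the citation ``Theorem 7 in \cite{Xu2}'', and is used as a black box in the proof of Theorem \ref{1.3}, so there is no internal proof to compare yours against; any proof you give is automatically a ``different route''. That said, your self-contained derivation is the natural one and is correct in substance. The $1$-prime hypothesis makes $A:\mathbb{Z}^{3}\to\mathbb{Z}^{2}$ surjective and makes $v=(Y_{23},-Y_{13},Y_{12})$ a primitive generator of $\ker A\cap\mathbb{Z}^{3}$, so $t(m|A)$ is the number of integers in an interval $[k_{-},k_{+}]$; your identification of the redundant inequality on each chamber (nonnegativity of the omitted coordinate at the endpoint, plus monotonicity in $k$) is sound; $\lfloor k_{+}\rfloor-\lceil k_{-}\rceil+1$ converts to the fractional-part form; and the functionals $\det(fa_{i}+ga_{j},\,\cdot\,)$, inverted modulo the relevant minor, recover $\{n_{3}/Y_{12}\}$ and $\{n_{2}/Y_{13}\}$ (resp.\ $\{n_{1}/Y_{23}\}$), with $1$-primeness entering a second, independent time to produce $fY_{13}+gY_{23}$ coprime to $Y_{12}$ via $\gcd(Y_{12},Y_{13},Y_{23})=1$. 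Beyond self-containedness, your argument buys something the paper's citation does not: it exposes errors in the transcribed statement. Your $\overline{\Omega}_{2}$ leading numerator is $m_{1}y_{3}-m_{2}x_{3}=\det(m,a_{3})$, while the lemma prints $m_{1}y_{3}-m_{2}y_{3}$; and for the $Y_{13}$-term, the printed numerator $m_{2}(f_{13}x_{1}+g_{13}x_{3})-m_{1}(f_{13}y_{1}+g_{13}y_{3})$ is congruent to $n_{2}(f_{13}Y_{12}-g_{13}Y_{23})$ modulo $Y_{13}$, so the inverse must be taken of $f_{13}Y_{12}-g_{13}Y_{23}$, not of $f_{13}Y_{12}+g_{13}Y_{23}$ as printed (harmless after replacing $g_{13}$ by $-g_{13}$, but a genuine discrepancy); the $\overline{\Omega}_{2}$ fractional terms need the analogous $x\leftrightarrow y$ adjustment. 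Two points you should state explicitly rather than leave as asides: the positivity $Y_{12},Y_{13},Y_{23}>0$ rests on the tacit assumption $x_{i}>0$ (already implicit in the lemma's definition of $\Omega_{i}$, which divides by $x_{i}$), and what you actually prove is a corrected form of the printed statement, which a careful write-up should flag rather than silently substitute.
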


Now we can  prove Theorem $\ref{1.3}$.

\begin{proof}[\textbf{Proof of Theorem $\ref{1.3}$}]  Suppose that $A(n)$ satisfies conditions in
Theorem $\ref{1.3}$. By the theory of generalized Euclidean division
and GCD in Section 2, for every $n\in\mathbb{N}$, $A(n)$ is 1-prime.
Note that there exists $i=1$ or $2$ such that $m(n)$ lies in
$\overline{\Omega}_i$ for $n$ sufficiently large. Thus, by Theorem
$\ref{2. 0. 3}$, Proposition $\ref{rational}$ and Lemma $\ref{5.4}$,
$t(m(n)|A(n))$ is an integer-valued quasi-polynomial in $n$ for $n$
sufficiently large.
\end{proof}
\begin{eg}\label{5.2.1}Let  $A(n)=\left(
  \begin{array}{ccc}
    2n+1  & 3n+1& n^{2} \\
    2 & 3& n+1  \\
  \end{array}
\right)$ and $m(n)=\left(
          \begin{array}{c}
            3n^{3}+1 \\
            3n^{2}+n-1 \\
          \end{array}
        \right)$. Then we can check that $A(n)$  satisfies conditions in Theorem
$\ref{1.3}$ and $m(n)\in \Omega_2$  for $n$ sufficiently large.
\end{eg}

\begin{center}
{\bf Acknowledgements\\[1ex]}
\end{center}
The second author are grateful to Prof. R. Stanley, M. Beck and Z.
Xu for communications  about the conjecture.


\begin{thebibliography}{99}
\bibitem{Computing} M. Beck and S. Robins,
\emph{{Computing the Continuous
Discretely}, 
 Springer-verlag}, 2006

\bibitem{poly}M. Beck, I. Gessel and T. Komatsu,
{The polynomial part of a restricted partition function related to
the Frobenius problem},\emph{ Electronic Journal of Combinatorics}
\textbf{8}(1) (2001), N7, 1-5.

\bibitem{number} W. Dahmen and C. Micchelli, 
{The Number of Solutions to Linear Diophantine Equations and
Multivariate Splines}, \emph{Transactions of the American
Mathematical Society},  308(1988), 509-532.


\bibitem{E} E. Ehrhart, Polynomes arithmetiques et Methode des Polyedres en Combinatoire, \emph{International Series of Numerical Mathematics}, Vol.
35, Birkhauser Verlag, Basel/Stuttgart, 1977.



\bibitem{mine} N. Li and S. Chen,  
{On the Ring of Integer-valued Quasi-polynomial}, arXiv:0706.4288v2
[math.NT].
\bibitem{ours2} N. Li and S. Chen,  
{On Popoviciu Type Formulas for Generalized Restricted Partition
Function}, arXiv:0709.3571v1 [math.NT].

\bibitem{stum}J. De Loera and B. Sturmfels, Algebraic Unimodular Counting,
\emph{ Mathematical Programming, Series B}, \textbf{96} (2003)
183-203, doi:10.1007/s10107-003-0383-9

\bibitem{Bhu} B. Mishra,  
\emph{{Algorithmic Algebra},  
  Springer-Verlag, }  
2001. 
\bibitem{stanley}  R. Stanley,   
\emph{{Enumerative Combinatorics},   Vol.  1.   Cambridge University
Press}, 1996.


\bibitem{vector}B. Sturmfels, On Vector Partition Functions, \emph{Journal
of Combinatorial Theory, Series A }\textbf{72}(1995), 302-209,
doi:10.1016/0097-3165(95)90067-5

\bibitem{Xu} Z. Xu, 
{Multi-dimensional Versions of a Formula of Popoviciu},\emph{ Science in China Series A.} \textbf{49}, (2006) 75-82, doi: 10.1007/s11425-005-0036-y

\bibitem{Xu2}Z. Xu, An Explicit Formulation for Two Dimensional Vector Partition Functions,  to appear in \emph{Contemporary Math : Integer points in
polyhedra--Geometry, Number Theory, Representation Theory, Algebra,
Optimization, Statistics}.








\end{thebibliography}
\end{document}